\newtheorem{thm}{Theorem}[section]
\newtheorem{lem}[thm]{Lemma}
\newtheorem{df}[thm]{Definition}
\newtheorem{rem}{Remark}
\numberwithin{equation}{section}
\newcommand{\E}{\mathbf{E}}
\newcommand{\prob}{\mathbf{P}}
\newcommand{\R}{\mathbb{R}}
\newcommand{\Z}{\mathbb{Z}}
\newcommand{\N}{\mathbb{N}}
\newcommand{\T}{\mathbb{T}}
\DeclareMathOperator{\diam}{diam}
\DeclareMathOperator{\Var}{Var}
\title{Universality of the time constant for $2D$ critical first-passage percolation}
\author{Michael Damron \thanks{The research of M. D. is supported by an NSF CAREER grant.} \\ \small{Georgia Tech}  \and Jack Hanson \thanks{The research of J. H. is supported by NSF grant DMS-161292.}\\ \small{City College of New York} \and Wai-Kit Lam \\ \small{University of Minnesota}}
\begin{document}
	\maketitle 
	\begin{abstract}
		We consider first-passage percolation (FPP) on the triangular lattice with vertex weights $(t_v)$ whose common distribution function $F$ satisfies $F(0)=1/2$. This is known as the critical case of FPP because large (critical) zero-weight clusters allow travel between distant points in time which is sublinear in the distance. Denoting by $T(0,\partial B(n))$ the first-passage time from $0$ to $\{x : \|x\|_\infty = n\}$, we show existence of the ``time constant'' and find its exact value to be
		\[
		\lim_{n \to \infty} \frac{T(0,\partial B(n))}{\log n} = \frac{I}{2\sqrt{3}\pi} \text{ almost surely},
		\]
		where $I = \inf\{x > 0 : F(x) > 1/2\}$ and $F$ is any critical distribution for $t_v$. This result shows that the time constant is universal and depends only on the value of $I$. Furthermore, we find the exact value of the limiting normalized variance, which is also only a function of $I$, under the optimal moment condition on $F$. The proof method also shows an analogous universality on other two-dimensional lattices, assuming the time constant exists.
	\end{abstract}
	
	\section{Introduction}
	
	Let $\T$ be the triangular lattice. We will take $\T$ to be embedded in $\R^2$ with vertex set $\Z^2$ and with edges between points of the form $(x_1,y_1)$ and $(x_2,y_2)$ with either (a) $\|(x_1,y_1)-(x_2,y_2)\|_1 = 1$ or (b) both $x_2=x_1+1$ and $y_2 = y_1 - 1$.
	
	We will consider first-passage percolation on $\T$, which is defined as follows. Let $(\omega_x)_{x\in \Z^2}$ be a family of i.i.d. uniform random variables on $(0,1)$. Fix a distribution function $F$ with $F(0^-)=0$. We define $t_x = F^{-1}(\omega_x)$ (so that $t_x$ has distribution $F$), where for $t\in (0,1)$,
	\[
	F^{-1}(t) := \inf\{y \in \R: F(y) \geq t\}. 
	\]
	A path is a sequence of vertices $(x_1, \ldots, x_n)$ with $x_i$ being adjacent to $x_{i+1}$ for all $i=1, \ldots, n-1$, and a circuit is a path $(x_1, \ldots, x_n)$ with $x_1=x_n$. We will always assume that paths and circuits are self-avoiding. (A self-avoiding circuit is one such that $(x_1, \dots, x_{n-1})$ is self-avoiding.) For a path $\gamma = (x_1, \ldots, x_n)$, we define its passage time by
	\[
	T(\gamma) = \sum_{i=2}^{n} t_{x_i},
	\]
	and for vertex sets $A,B\subseteq \Z^2$, we define the first-passage time from $A$ to $B$ by
	\[
	T(A,B) = \inf \{ T(\gamma): \gamma \text{ is a path from a vertex in $A$ to a vertex in $B$}\}.
	\]
	For notational simplicity, for $x\in\Z^2$ and $B\subseteq \Z^2$, we will write $T(x, B)$ for $T(\{x\}, B)$. In first-passage percolation, one studies the asymptotic behavior of random variables such as $T(0,\partial B(n))$, where $B(n) = \{x\in\Z^2: \|x\|_\infty \leq n\}$ and $\partial B(n) = \{x\in\Z^2: \|x\|_\infty = n\}$.
	
	In this paper, we will study the critical case, namely $F(0) = p_c = 1/2$, where $p_c$ is the critical threshold for site percolation on $\T$. In this case, it is shown by Damron-Lam-Wang in \cite{critical} that under suitable moment assumptions on $t_x$, one has
	\begin{equation}
	\label{eq: asy}
	\E T(0,\partial B(n)) \asymp \sum_{k=2}^{\lfloor \log{n} \rfloor} F^{-1}(p_c+2^{-k}) \text{ and } \Var(T(0,\partial B(n))) \asymp \sum_{k=2}^{\lfloor \log{n} \rfloor} F^{-1}(p_c+2^{-k})^2,
	\end{equation}
	and further if $\Var(T(0,\partial B(n)))\to\infty$ as $n\to \infty$, then one also has a Gaussian central limit theorem for the variables $(T(0,\partial B(n)))$\footnote{These results were proved for edge-FPP on the square lattice, but similar arguments give them for the current setting.}. Sharper asymptotics were proved by C.-L. Yao in \cite{YaoLLN, Yao} (with further development in \cite{YaoBoundary,YaoAsymptotics}) in the special case where $t_x$ is Bernoulli (that is, $t_x = 0$ with probability $1/2$ and $t_x=1$ with probability $1/2$):
	\[
	\frac{T(0,\partial B(n))}{\log{n}} \to \frac{1}{2\sqrt{3}\pi} \text{ a.s., } \frac{\E T(0,\partial B(n))}{\log{n}} \to \frac{1}{2\sqrt{3}\pi}, \frac{\Var(T(0,\partial B(n)))}{\log{n}} \to \frac{2}{3\sqrt{3}\pi} - \frac{1}{2\pi^2}
	\]
	as $n\to\infty$. By analogy with the non-critical case of FPP, we will refer to the limit on the left (of $T(0,\partial B(n))/\log n$) as the {\it time constant} for the model. In \cite[Remark~1.3]{YaoLLN}, Yao asks whether one can extend these limit theorems to general distributions.
	
	The behaviors in \eqref{eq: asy} suggest that the limits in the limit theorems, if existent, should also depend on the behavior of $F^{-1}$ near $p_c$, or equivalently the behavior of $F$ near $0$. We will show existence of these limits and, from their explicit forms, it is manifest that this is indeed the case. Let 
	\[
	I = \inf\{x>0: F(x) > p_c\}
	\]
	be the infimum of the support of the law of $t_x$ excluding $0$.
	\begin{thm}
		\label{thm:lln}
		On the triangular lattice $\T$, we have a law of large numbers:
		\begin{equation}
		\label{eq: lln}
		\lim_{n\to\infty} \frac{T(0,\partial B(n))}{\log{n}} = \frac{I}{2\sqrt{3}\pi}\quad\text{almost surely}.
		\end{equation}
		Furthermore, if $\E \min\{t_1,\ldots,t_6\}^2<\infty$, where $t_1,\ldots, t_6$ are i.i.d. copies of $t_v$, then
		\begin{equation}
		\label{eq: var}
		\lim_{n\to\infty} \frac{\Var(T(0,\partial B(n)))}{\log{n}} = I^2\left(\frac{2}{3\sqrt{3}\pi} - \frac{1}{2\pi^2}\right).
		\end{equation}
	\end{thm}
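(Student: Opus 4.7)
The plan is to couple the general model to a Bernoulli-type one through the common uniforms $(\omega_v)$: let $T^{(B,I)}$ denote the passage time in the model with weights $I \cdot \mathbf{1}\{\omega_v > 1/2\}$. Because $F(0) = 1/2$ and $I = \inf\{x > 0 : F(x) > 1/2\}$, we have $t_v \geq I \cdot \mathbf{1}\{\omega_v > 1/2\}$ pointwise, so $T \geq T^{(B,I)}$ almost surely. Since $T^{(B,I)} = I \cdot T^{(B,0/1)}$, Yao's theorems supply $T^{(B,I)}/\log n \to I/(2\sqrt 3\pi)$ almost surely and $\Var T^{(B,I)} / \log n \to I^2 (2/(3\sqrt 3\pi) - 1/(2\pi^2))$, which immediately give the $\liminf$ in \eqref{eq: lln} and the right-hand side of \eqref{eq: var}. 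All that remains is to show $T - T^{(B,I)} \geq 0$ is negligible in both mean and $L^2$.

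For the upper bound I would construct an explicit near-optimal path $\gamma^\star$. Let $C_1 \subset C_2 \subset \cdots \subset C_{M(n)}$ be a maximal family of disjoint closed circuits around $0$ in $B(n)$, chosen greedily from innermost outward; by planar site duality on $\T$, every path from $0$ to $\partial B(n)$ must cross each $C_i$ at a closed vertex (so $T^{(B,I)} = I \cdot M(n)$), while each annular region between consecutive circuits (and inside $C_1$, outside $C_{M(n)}$) admits a zero-cost open crossing. I would cross $C_i$ at a cheapest closed vertex $v_i^\star := \arg\min_{v \in C_i} t_v$ having open neighbors on both sides, and splice these crossings through the open sub-paths to form $\gamma^\star$, yielding $T(\gamma^\star) = \sum_{i=1}^{M(n)} Y_i$ with $Y_i := \min_{v \in C_i} t_v$. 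The key estimate is that any closed circuit in the annulus $B(2^{k+1}) \setminus B(2^k)$ has length at least $c \cdot 2^k$, so $Y_i$ is the minimum of $\Omega(2^k)$ i.i.d.\ copies of $t_v$ given $t_v > 0$, whose essential infimum is $I$; consequently $\E(Y_i - I)$ decays geometrically in the scale $k$. Summing over $i$ yields $\E \sum_i (Y_i - I) = O(1)$, which combined with $T \geq T^{(B,I)}$ gives $\E T = I \cdot \E M(n) + O(1)$, and the concentration bounds of \cite{critical} then upgrade this to the almost-sure statement \eqref{eq: lln}.

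For the variance \eqref{eq: var}, the same decomposition $0 \leq T - T^{(B,I)} \leq E := \sum_i (Y_i - I)$ applies. Under the assumption $\E \min(t_1,\ldots,t_6)^2 < \infty$, one has $\E Y_i^2$ uniformly bounded (any circuit of length $\geq 6$ makes $Y_i$ stochastically dominated by $\min(t_1, \ldots, t_6)$), and a second-moment version of the above $\min$-concentration gives $\E(Y_i - I)^2$ decaying geometrically in the scale, so $\E E^2 = O(1)$. Hence $\Var(T - T^{(B,I)}) \leq \E(T - T^{(B,I)})^2 \leq \E E^2 = O(1)$, and Cauchy--Schwarz gives $|\operatorname{Cov}(T^{(B,I)}, T - T^{(B,I)})| = O(\sqrt{\log n}) = o(\log n)$. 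Therefore $\Var T = \Var T^{(B,I)} + o(\log n)$, and Yao's Bernoulli variance constant yields \eqref{eq: var}.

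The principal technical obstacle is the ``good neighbor'' adjustment in the construction of $\gamma^\star$: restricting the min of $t_v$ over $C_i$ to the subset of vertices having open neighbors on both sides could a priori inflate $\E(Y_i - I)$ enough to destroy the geometric decay. I would resolve this by observing that conditionally on $C_i$ the $(\omega_w)$-values at $w \notin C_i$ are essentially i.i.d.\ uniform, and standard RSW/arm-exponent estimates for critical $2D$ site percolation on $\T$ show that a positive fraction of $v \in C_i$ has the two-sided open-neighbor property, uniformly in scale and independently of its own $t_v$ (which is determined by $\omega_v$ alone, conditioned on $\omega_v > 1/2$). Hence the restricted minimum is still a minimum of $\Omega(2^k)$ i.i.d.\ copies of $(t_v \mid t_v > 0)$ on a high-probability event, and the geometric decay of $\E(Y_i - I)$ survives.
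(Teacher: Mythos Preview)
Your coupling and lower bound are correct, and the idea of crossing each closed circuit at a cheap vertex is the right one. The genuine gap is in the construction of $\gamma^\star$: the condition ``open neighbors on both sides'' is far too weak for the splicing to go through. What you need at $v_i^\star \in C_i$ is not an open neighbor but an open \emph{path} in the region between $C_{i-1}$ and $C_i$ connecting a neighbor of $v_i^\star$ to a neighbor of the \emph{specific} point $v_{i-1}^\star$ already chosen on $C_{i-1}$ (and similarly on the other side). Duality guarantees \emph{some} open crossing between consecutive circuits, but it does not let you pick its endpoints; the open cluster in $\mathring C_i$ containing a neighbor of $v_{i-1}^\star$ may touch $C_i$ at only a handful of vertices, and a minimum over that handful gives no useful bound. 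Your proposed fix (``conditionally on $C_i$ the $(\omega_w)$-values at $w \notin C_i$ are essentially i.i.d.\ uniform'') is also false: the $C_i$ are extremal (innermost/outermost) circuits, so conditioning on them conditions heavily on the configuration in the regions between them.

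What is actually needed---and this is the heart of the paper's proof---is the \emph{good circuit} property (Lemma~\ref{lem:good}, taken from \cite{KSZ}): any open set of large diameter in $\mathring C_i$ adjacent to $C_i$ has an open cluster touching $C_i$ at polynomially many (in $\diam C_i$) well-separated vertices. With this in hand the path is built sequentially as in Lemma~\ref{lem: induction}: from $v^{(i-1)}$ one has an open path of large diameter into $\mathring C_i$, the good-circuit property produces at least $(\diam C_i)^{c_2}$ accessible candidates on $C_i$, and among these one selects a low-weight $v^{(i)}$ that is also far from $C_{i+1}$ (so the induction can continue). The choice of $v^{(i)}$ thus depends on $v^{(i-1)}$ and cannot be made independently circuit by circuit as you propose.

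A secondary issue: the claimed geometric decay of $\E(Y_i - I)$ fails in general (e.g.\ if $F^{-1}(1/2+u) - I \asymp 1/\log(1/u)$, the minimum over a circuit at scale $k$ is of order $1/k$). This is not fatal for \eqref{eq: lln} since $o(\log n)$ suffices, but it breaks your $\E E^2 = O(1)$ claim for \eqref{eq: var}. The paper avoids pointwise control of $T - T^{\textnormal{B}}$ altogether for the variance: it compares $\Var T$ and $\Var T^{\textnormal{B}}$ through the Kesten--Zhang martingale decomposition and bounds each $\E(\Delta_k - \Delta_k^{\textnormal{B}})^2$ separately, using the low-weight path construction locally between successive open circuits.
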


	\begin{rem}
	\begin{enumerate}
		\item One can also show that if $\E \min\{t_1,\ldots,t_6\}<\infty$, then
		\begin{equation}
		\label{eq: mean}
		\lim_{n\to\infty}\frac{\E T(0,\partial B(n))}{\log{n}} = \frac{I}{2\sqrt{3}\pi}.
		\end{equation}
		This can be proved by using a similar, but simpler, method than that for \eqref{eq: var}. We omit the details here.
		\item  Our proof method extends to a large class of two-dimensional lattices (including the square lattice). It gives a weaker result, since the limits \eqref{eq: lln}, \eqref{eq: var} and \eqref{eq: mean} are only known to hold for the Bernoulli distribution on the triangular lattice (due to the use of CLE$_6$ in their proofs). However, if any of these are shown to exist on other lattices (with possibly different limits), then our method shows that they also hold for general distributions (under suitable moment assumptions). In implementing this, one would need to replace the exact values of arm exponents used in Lemma~\ref{lem: G_k_lemma} by inequalities for these exponents on general lattices given in \cite{KZexponents}.
		\item In the standard case of FPP, where $F(0)<p_c$, there is no similar universality of the time constant. Indeed, using \cite[Theorem~(2.13)]{BK}, one can construct two bounded distributions for $t_x$ such that they have the same infimum, but the limits $\lim_{n \to \infty} \frac{T(0,\partial B(n))}{n}$ for the different distributions are positive and distinct real numbers. 
		\item The moment conditions in Theorem~\ref{thm:lln} and equation \eqref{eq: mean} are optimal in the following senses. By a variant of \cite[Lemma~3.1]{coxdurrett}, one has for any $q > 0$, $\mathbf{E}T(0,\partial B(n))^q < \infty$ if and only if $\mathbf{E}\min\{t_1, \dots, t_6\}^q<\infty$. Therefore if the above moment conditions fail, then either the mean or the variance of $T(0,\partial B(n))$ will be infinite. Under a slightly stronger moment condition on $t_v$, one can prove point-to-point analogues of the statements of Theorem~\ref{thm:lln} (replacing $T(0,\partial B(n))$ with $T(0,x)$ and $\log n$ by $\log \|x\|$). See a similar modification in \cite{Yao}.
	\end{enumerate}
	\end{rem}

\paragraph{Question:}
	According to \eqref{eq: asy}, there exist distributions such that $\E T(0,\partial B(n)) = o(\log{n})$ and $\Var(T(0,\partial B(n))) = o(\log{n})$, but both quantities diverge to infinity as $n\to\infty$. In this case, does
	\[
	\lim_{n\to\infty} \frac{T(0,\partial B(n))}{\sum_{k=2}^{\lfloor \log n \rfloor} F^{-1}(p_c+2^{-k})} \text{ exist?}
	\]

\bigskip
\bigskip
In this paper, the symbol $C_i$ (where $i\in\N$) denotes a (possibly) large constant, and the symbol $c_i$ ($i\geq 4$) denotes a (possibly) small constant. $c_1, c_2, c_3$ are reserved for Definition~\ref{def: low_weight} and the definition of good circuit. The symbol $\|\cdot\|$ will refer to the Euclidean norm.

	\subsection{Sketch of proofs}	
	
	\subsubsection{Sketch of \eqref{eq: lln}}


	We begin by coupling together our vertex weights with Bernoulli weights: we define the Bernoulli weights as $t_v^\textnormal{B} = I \cdot \mathbf{1}_{\{t_v > 0\}}$. Because $t_v^\textnormal{B} \leq t_v$, one has
	\[
	\frac{I}{2\sqrt{3}\pi} = \lim_{n \to \infty} \frac{T^\textnormal{B}(0,\partial B(n))}{\log n} \leq \liminf_{n \to \infty} \frac{T(0,\partial B(n))}{\log n} \text{ almost surely}.
	\]
	(Here, $T^\textnormal{B}$ is the passage time using the Bernoulli weights.) 
	
	To show the other inequality, it will suffice to prove that
	\[
	T(0,\partial B(n)) - T^\textnormal{B}(0,\partial B(n)) = o(\log n) \text{ almost surely}.
	\]
	The idea for this proof is to use that $T^\textnormal{B}$ is equal to the maximal number of disjoint closed (that is, with weight $>0$) circuits separating 0 from $\partial B(n)$. To construct such circuits, we note in Lemma~\ref{lem: outermost} that results of \cite{KSZ} allow us to find an infinite sequence of disjoint closed circuits surrounding the origin which are successively ``outermost.'' In particular, it is not possible to find a closed circuit lying entirely in the region strictly between two adjacent circuits in the sequence. Because of this extremal property, one can construct an infinite geodesic $\Pi$ (a path all of whose finite segments are geodesics) for the weights $(t_v^\textnormal{B})$ by starting at 0, following any open (that is, with weight $=0$) path from 0 to the first circuit, using a vertex from this circuit, following any open path to the next circuit, and so on. One can show that if $\Pi_n$ is the portion of $\Pi$ until its first intersection with $\partial B(n)$, then
	\[
	T^\textnormal{B}(\Pi_n) - T^\textnormal{B}(0, \partial B(n)) = o(\log n) \text{ almost surely}.
	\]
	The goal then is to show that
	\begin{equation}\label{eq: to_show_sketch_1}
	T(\Pi_n) - T^\textnormal{B}(\Pi_n) = o(\log n) \text{ almost surely}
	\end{equation}
	for a particular choice of $\Pi$.
	
	To choose $\Pi$, we use an adaptation of the ``good circuit'' construction from \cite{KSZ}. Given an $\epsilon>0$, we consider a vertex $v$ to be of ``low weight'' if $t_v \leq I+\epsilon$. (Low-weight vertices are defined more precisely in Definition~\ref{def: low_weight}.) In Section~\ref{sec: construction}, we show that with high probability, any open path starting at one circuit in the above sequence and ending at the next --- so long as these circuits have sufficient distance from each other --- can be modified to retain the same initial point, but to end adjacent to a vertex in the second circuit which has low weight. (See Figure~\ref{fig: goodcircuits}.) This idea underlies the main construction (Lemma~\ref{lem: induction}) in which we build an infinite path $\gamma$ starting at 0 which passes through each circuit exactly once and contains only finitely many vertices which are not of low weight. Because $\gamma$ is an infinite geodesic for $T^\textnormal{B}$, we take $\Pi = \gamma$, so that $\Pi_n$ is an initial segment $\gamma_n$ of $\gamma$. Then
	\begin{align*}
	T(\gamma_n) - T^\textnormal{B}(\gamma_n) = \sum_{v : t_v > 0, v \in \gamma_n} (t_v - I) &\leq C+ \sum_{v : t_v > 0, v \in \gamma_n} \epsilon \\
	&= C+(\epsilon/I)T^\textnormal{B}(\gamma_n) \leq C\epsilon \log n.
	\end{align*}
	This is true for any $\epsilon$, so this shows \eqref{eq: to_show_sketch_1} and completes the sketch.

	\begin{figure}[h]
		\centering
		\includegraphics[width=.6\textwidth]{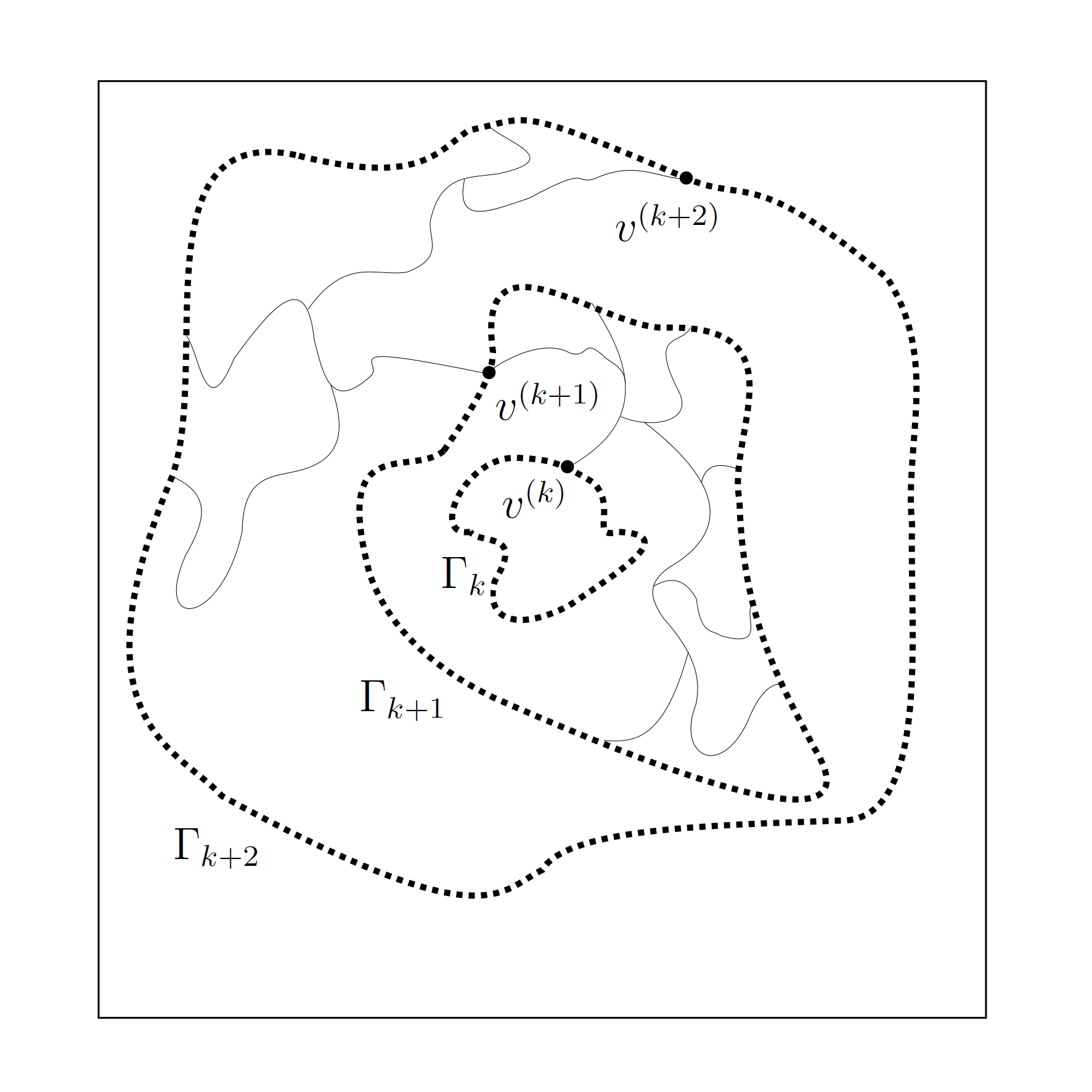}
		\caption{Illustration of the modification of open paths from Lemma~\ref{lem: induction}. The circuits $\Gamma_{k+i}$, $i=0, 1, 2$ are consecutive outermost circuits from the construction of Lemma~\ref{lem: outermost}. The light gray paths connecting vertices $v^{(i)}$ on the circuits are open. So long as the circuits have sufficient distance from each other, one can find many modified open paths which begin at the same point and end at the next circuit. With high probability, at least one such path will end at a low-weight vertex.}
		\label{fig: goodcircuits}
	\end{figure}

	\subsubsection{Sketch of \eqref{eq: var}}
	
	To show universality of the variance, we represent the passage time as a sum of martingale differences $\Delta_k$, so that
	\[
	\Var(T(0,\mathcal{O}_n)) = \sum_{k=0}^{n} \E \Delta_k^2,
	\]
	where $\mathcal{O}_n$ is the innermost open circuit in any annulus of the form $B(2^{m+1}) \setminus B(2^m)$ for $m \geq n$. Here, $\Delta_k$ is the martingale difference of $T(0, \mathcal{O}_n)$ using the filtration generated by weights on and inside $\mathcal{O}_k$. Using a representation of $\Delta_k$ from \cite{KestenZhang}, we split the difference
	\[
		\Var(T(0,\mathcal{O}_n)) - \Var(T^\textnormal{B}(0, \mathcal{O}_n)) = \sum_{k=0}^n \E(\Delta_k^2 - (\Delta_k^\textnormal{B})^2)
	\]
	into a sum of three terms, each of which is bounded similarly. The term we focus on can be written (see Lemma~\ref{lem: y_tilde}, where the term we are discussing is called $Y$) as a difference of passage times between two open circuits:
	\[
	T(\mathcal{O}_k,\bar{\mathcal{O}}_k) - T^\textnormal{B}(\mathcal{O}_k,\bar{\mathcal{O}}_k).
	\]
	(In Lemma~\ref{lem: y_tilde}, this difference is called $\tilde Y$.) Here $\bar{\mathcal{O}}_k$ is the next circuit of the form $\mathcal{O}_m$ which is not equal to $\mathcal{O}_k$. Therefore the proof reduces to showing
	\begin{equation}\label{eq: to_show_sketch_2}
	\sum_{k=0}^n \E(T(\mathcal{O}_k,\bar{\mathcal{O}}_k) - T^\textnormal{B}(\mathcal{O}_k,\bar{\mathcal{O}}_k))^2 = o(n).
	\end{equation}
	Once this is done, then the proof is completed by bounding the difference between point-to-box passage times of the form $T(0,\partial B(2^n))$ and point-to-circuit passage times of the form $T(0,\mathcal{O}_n)$. Although such bounds have been derived in previous works under stronger moment assumptions, the situation here is more delicate. We give this argument after \eqref{eq: var_diff}.
	
	To bound the terms of \eqref{eq: to_show_sketch_2}, we use the construction of low-weight paths from Section~\ref{sec: construction}. To use this method, we need to show in Lemma~\ref{lem: G_k_lemma} that with high enough probability, the circuit $\mathcal{O}_k$ is sufficiently far away from closed circuits $\mathcal{C}_j$ from the sequence in Lemma~\ref{lem: outermost}. When this occurs, one can, as in the proof of universality of the time constant, bound the difference of passage times in \eqref{eq: to_show_sketch_2} using paths connecting these circuits which pass through some number of closed circuits using only low-weight vertices. Partitioning the expectation according to this ``sufficiently far'' event, one has a bound for the summands of \eqref{eq: to_show_sketch_2} of the form
	\[
	o(1) + \epsilon \E (\text{maximal }\# \text{ of closed circuits between } \mathcal{O}_k \text{ and } \bar{\mathcal{O}}_k)^2.
	\]
	Here, the $o(1)$ term corresponds to the bound $C/\sqrt{k}$ in \eqref{eq: first_term}, and represents the expectation on the event that $\mathcal{O}_k$ is close to a closed circuit $\mathcal{C}_j$. The second term appears in \eqref{eq: come_back_here!} (where $\epsilon$ is written as a $k$-dependent term $a_k^2$). After showing that this expectation is bounded by a constant, we obtain the bound $o(1)+C \epsilon$ for summands in \eqref{eq: to_show_sketch_2}, and this completes the sketch.


	\section{Preliminaries}
	For a circuit $\Gamma$, we define $\mathring{\Gamma}$ to be the interior of $\Gamma$, namely the bounded connected component of $\Z^2 \setminus \Gamma$ seen as a subgraph of $\T$. 
	We say that a vertex $x$ is open if $\omega_x \leq 1/2$ and closed otherwise. A path (or circuit) is open if all its vertices are open; it is closed if all its vertices are closed.
	
		We will use several (modified) lemmas from \cite{KSZ}. The first provides an infinite sequence of ``outermost'' closed circuits surrounding 0.

	\begin{lem}
		\label{lem: outermost}
		Almost surely, there exists a sequence of random disjoint circuits $(\mathcal{C}_k)_{k\geq 1}$ with $0 \in \mathring{\mathcal{C}_k}$, so that each of these circuits is closed, $\mathcal{C}_k \subseteq \mathring{\mathcal{C}}_{k+1}$, and $\mathcal{C}_k$ is the  outermost circuit in $\mathring{\mathcal{C}}_{k+1}$ which is entirely closed. (Also there is no closed circuit surrounding $0$ in $\mathring{\mathcal{C}}_1$.) Moreover, there exist constants $C_1, C_2, c_4>0$ such that almost surely, $\diam(\mathcal{C}_{k+1}) \leq k^{C_1}\diam(\mathcal{C}_k)$ for all large $k$, and
		\begin{equation}\label{eq: outermost_inequality}
		c_4\leq \liminf_{k\to\infty} \frac{\log(\diam(\mathcal{C}_k))}{k} \leq \limsup_{k\to\infty} \frac{\log(\diam(\mathcal{C}_k))}{k} \leq C_2.
		\end{equation}
	\end{lem}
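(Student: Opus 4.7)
The plan is to construct $(\mathcal{C}_k)$ inductively using Russo--Seymour--Welsh (RSW) estimates at criticality on $\T$, then extract quantitative growth bounds by treating each $\mathcal{C}_k$ as a stopping set. Since $p_c=1/2$ on $\T$ and the vertex weights are i.i.d.\ uniform, the events
\[
A_j := \{\text{the annulus }B(2^{j+1})\setminus B(2^j)\text{ contains a closed circuit surrounding }0\}
\]
satisfy $\prob(A_j)\geq \delta$ for a uniform $\delta>0$ by RSW, and are independent in $j$. Borel--Cantelli then yields infinitely many disjoint closed circuits around $0$ almost surely. I would let $\mathcal{C}_1$ be the innermost closed circuit around $0$ (so no closed circuit surrounds $0$ in $\mathring{\mathcal{C}}_1$), and inductively let $\mathcal{C}_{k+1}$ be the innermost closed circuit strictly containing $\mathcal{C}_k$ in its interior. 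The ``outermost'' property then follows from a planarity argument: any closed circuit $\Gamma\subseteq\mathring{\mathcal{C}}_{k+1}$ with $\mathcal{C}_k\subseteq\mathring{\Gamma}$ must surround $0$ and lie strictly between $\mathcal{C}_k$ and $\mathcal{C}_{k+1}$, contradicting the minimality used to define $\mathcal{C}_{k+1}$.

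Next I would verify that each $\mathcal{C}_k$ is a stopping set: for every deterministic circuit $\Gamma$ around $0$, $\{\mathcal{C}_k=\Gamma\}\in\sigma(\omega_v:v\in\Gamma\cup\mathring{\Gamma})$. Consequently, conditional on $\mathcal{C}_k$, the weights on vertices outside $\mathcal{C}_k\cup\mathring{\mathcal{C}}_k$ remain i.i.d.\ uniform, and RSW may be applied to annuli outside $\mathcal{C}_k$. Setting $D_k:=\diam(\mathcal{C}_k)$, one has $\mathcal{C}_k\subseteq B(2D_k)$ (since $0\in\mathring{\mathcal{C}}_k$), so order $m$ disjoint dyadic annuli of modulus $2$ fit inside $B(2^{m+1}D_k)\setminus B(2D_k)$; by RSW and independence,
\[
\prob\bigl(\text{no closed circuit around }0\text{ in }B(2^{m+1}D_k)\setminus B(2D_k)\bigm|\mathcal{C}_k\bigr)\leq(1-\delta)^m.
\]
On the complementary event, such a circuit surrounds $\mathcal{C}_k$ and lies in $B(2^{m+1}D_k)$, so $D_{k+1}\leq 2^{m+2}D_k$. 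Choosing $m=\lceil c\log k\rceil$ with $c$ large makes the right side summable in $k$; Borel--Cantelli then gives $D_{k+1}\leq k^{C_1}D_k$ for all large $k$, almost surely.

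For the $\liminf$/$\limsup$ bounds, I would apply a martingale strong law to the nonnegative increments $Y_k:=\log(D_{k+1}/D_k)$ with respect to the filtration $\F_k:=\sigma(\omega_v:v\in\mathring{\mathcal{C}}_k\cup\mathcal{C}_k)$. The tail bound above shows that, uniformly in $k$, $Y_k$ has conditional exponentially decaying tails, hence $\E[Y_k\mid\F_k]\leq C_2$ and conditional variances are bounded. For the lower bound, RSW applied to the annulus $B(4D_k)\setminus B(2D_k)$ yields that, conditional on $\mathcal{C}_k$, with probability at least $\delta'>0$ there is an \emph{open} circuit around $0$ in this annulus. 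On that event, any closed circuit around $0$ strictly containing $\mathcal{C}_k$ must lie outside $B(4D_k)$, forcing $D_{k+1}\geq 4D_k$ and $Y_k\geq\log 4$. Hence $\E[Y_k\mid\F_k]\geq \delta'\log 4$ uniformly in $k$. The martingale SLLN applied to $\sum_{j<n}(Y_j-\E[Y_j\mid\F_j])$ then sandwiches $(1/n)\log D_n$ between $c_4:=\delta'\log 4>0$ and $C_2$ almost surely.

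The main obstacle is verifying the stopping-set structure precisely, so that conditioning on $\mathcal{C}_k$ genuinely leaves the exterior configuration i.i.d.\ and RSW can be invoked on annuli chosen relative to the random circuit $\mathcal{C}_k$. Once this is in place, the RSW estimates combined with the martingale SLLN deliver the growth bound, the $\limsup$, and the $\liminf$ in essentially parallel arguments.
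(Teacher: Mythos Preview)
Your construction of the $(\mathcal{C}_k)$, the stopping-set property, the bound $D_{k+1}\le k^{C_1}D_k$ via RSW and Borel--Cantelli, and the $\limsup$ bound are all correct and match the argument in \cite[Lemma~1]{KSZ}, which is what the paper cites without further detail.

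The genuine gap is in your $\liminf$ argument. You claim that if there is an open circuit $\mathcal{O}$ around $0$ in $B(4D_k)\setminus B(2D_k)$, then any closed circuit around $0$ strictly containing $\mathcal{C}_k$ must lie outside $B(4D_k)$. This is false: the open circuit $\mathcal{O}$ prevents closed circuits from \emph{crossing} it, but nothing stops $\mathcal{C}_{k+1}$ from lying entirely between $\mathcal{C}_k$ and $\mathcal{O}$, hence inside $B(4D_k)$ with $D_{k+1}$ arbitrarily close to $D_k$. Thus the lower bound $\E[Y_k\mid\F_k]\ge\delta'\log4$ is unjustified, and the martingale SLLN does not deliver the $\liminf$ as written. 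The standard fix (used in \cite{KSZ}) is to bound instead the number of $\mathcal{C}_j$ contained in $B(2^n)$: this is at most the maximal number $M_n$ of disjoint closed circuits around $0$ in $B(2^n)$, and RSW for open crossings of dyadic annuli together with the BK inequality gives $\prob(M_n>C_*n)\le e^{-cn}$ for suitable $C_*,c>0$. Borel--Cantelli then yields $M_n\le C_*n$ eventually a.s., and inverting gives $\log\diam(\mathcal{C}_k)\ge (k/C_*)\log2-O(1)$, which is the required $\liminf$ bound.
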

	\begin{proof}
	This statement is the same as that of \cite[Lemma~1]{KSZ}, except that in that paper, the circuits may be open or closed, and there is no mention of there being no circuit surrounding 0 in $\mathring{\mathcal{C}}_1$. The proof, however, is the same.
	\end{proof}
	
	The next lemma controls the number of circuits from the above sequence which intersect fixed boxes. It is a combination of \cite[Lemma~6]{KSZ} and the inequality in its proof (second paragraph in \cite[p.~23]{KSZ}).
	
	\begin{lem}
		\label{lem:squares}
		For $c\in (0,1)$, $j\geq 1$ and $r,s\in \Z$, define
		\[
		\tau(r, s) = \tau(r, s; c, j) = [r2^{c(j+1)}, (r+3)2^{c(j+1)}] \times [s2^{c(j+1)}, (s+3)2^{c(j+1)}]
		\]
		and $N(j,c)$ to be the number of squares $\tau(r,s)$ which intersect $B(2^{j+1})$ and intersect two successive circuits $\mathcal{C}_k$ and $\mathcal{C}_{k+1}$ with $\diam(\mathcal{C}_k) \geq 2^j$. Then for fixed $c\in (0,1)$, there exists $C_3>0$ such that
		\begin{equation}
		\label{eq: Njc_prob}
		\prob(N(j,c) > j^2 ) \leq \frac{C_3}{j^2}.
		\end{equation}
		In particular, for fixed $c\in (0,1)$, almost surely, 
		\begin{equation}
		\label{eq:Njc}
		N(j,c) \leq j^2 \quad \text{for all large $j$}.
		\end{equation}
		Moreover, if $N^{(3)}(j,c)$ is the number of squares $\tau(r, s)$ which intersect $B(2^{j+1})$ and intersect three successive circuits $\mathcal{C}_k$, $\mathcal{C}_{k+1}$, $\mathcal{C}_{k+2}$ with $\diam(\mathcal{C}_k) \geq 2^{j-(\log{j})^2}$, then there exists $C_4>0$ such that
		\begin{equation}
		\label{eq: N3_prob}
		\prob(N^{(3)}(j,c) > 0) \leq \frac{C_4}{j^2}.
		\end{equation}
		In particular, almost surely,
		\begin{equation}
		\label{eq:N3}
		N^{(3)}(j,c) = 0 \quad \text{for all large $j$}.
		\end{equation}
	\end{lem}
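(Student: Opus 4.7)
The plan is to prove both probability bounds \eqref{eq: Njc_prob} and \eqref{eq: N3_prob} by Markov's inequality applied to direct expectation bounds on $N(j,c)$ and $N^{(3)}(j,c)$, and then to deduce the almost-sure statements \eqref{eq:Njc} and \eqref{eq:N3} via the Borel--Cantelli lemma. This is essentially the strategy of \cite[Lemma~6]{KSZ}.

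First I would count candidate squares: since each $\tau(r,s)$ has side $3\cdot 2^{c(j+1)}$, the number of them meeting $B(2^{j+1})$ is at most $O(2^{2(j+1)(1-c)})$. For a fixed $\tau$, I need to bound the probability that there exist successive circuits $\mathcal{C}_k,\mathcal{C}_{k+1}$ with $\diam(\mathcal{C}_k)\ge 2^j$ both intersecting $\tau$. For any $x\in\tau\cap\mathcal{C}_k$, connectedness of $\mathcal{C}_k$ together with the diameter bound forces $\mathcal{C}_k$ to reach distance at least $2^{j-1}$ from $x$, and similarly for $\mathcal{C}_{k+1}$; so both circuits cross the annulus around $x$ of inner radius of order $2^{c(j+1)}$ and outer radius $2^{j-1}$, producing four pairwise disjoint closed arms. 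By the outermost property in Lemma~\ref{lem: outermost} there is no closed circuit around $0$ in $\mathring{\mathcal{C}}_{k+1}\setminus\overline{\mathring{\mathcal{C}}_k}$, so the matching property for site percolation on $\T$ provides an open crossing of this annular region, located between two of the four closed arms. Together these constitute a polychromatic five-arm event in the annulus around $x$. The universal upper bound on the polychromatic five-arm probability on $\T$ is $C(\ell/R)^2$ without polylog correction (with the exact exponent $2$ coming from Smirnov--Werner, though even Kesten's universal bound suffices), so for a given $\tau$ the probability is $O((2^{c(j+1)}/2^j)^2) = O(2^{-2j(1-c)+O(1)})$. Summing over squares gives $\E N(j,c)=O(1)$, and Markov yields \eqref{eq: Njc_prob}.

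For $N^{(3)}(j,c)$, three successive closed circuits $\mathcal{C}_k,\mathcal{C}_{k+1},\mathcal{C}_{k+2}$ with $\diam(\mathcal{C}_k)\geq 2^{j-(\log j)^2}$ all meeting $\tau$ force six disjoint closed arms together with two open arms (one in each of the two annular regions between consecutive circuits, by the same matching argument) in an annulus of inner radius $\sim 2^{c(j+1)}$ and outer radius $\sim 2^{j-(\log j)^2}$; this is at least a polychromatic six-arm event. Since the polychromatic six-arm exponent on $\T$ strictly exceeds $2$ (it equals $35/12$), there exists $\eta>0$ for which the arm probability is at most $C\cdot 2^{-(2+\eta)j(1-c)}\cdot 2^{O((\log j)^2)}$. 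The factor $2^{-\eta j(1-c)}$ absorbs the $(\log j)^2$ loss, so summing over squares gives $\E N^{(3)}(j,c)\leq C\cdot 2^{-\eta' j}$ for some $\eta'>0$, and $\prob(N^{(3)}(j,c)>0)\leq\E N^{(3)}(j,c)$ then yields \eqref{eq: N3_prob} with considerable room to spare.

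Finally, both almost-sure statements \eqref{eq:Njc} and \eqref{eq:N3} follow from Borel--Cantelli using $\sum_j j^{-2}<\infty$. I expect the main technical difficulty to be the topological arm-event identification in the two middle paragraphs: verifying that the four (resp.\ six) closed arms and the open arm(s) really are pairwise disjoint and appear in a polychromatic cyclic order to which the universal polychromatic arm exponents apply. This is the content of the inequality in the second paragraph on \cite[p.~23]{KSZ}, and can be reproduced as there; everything else is Markov plus counting.
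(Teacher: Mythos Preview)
Your proposal is correct and matches the paper's approach: the paper gives no self-contained proof at all, simply citing \cite[Lemma~6]{KSZ} and the inequality on \cite[p.~23]{KSZ}, and what you have written is a faithful outline of that KSZ argument (square counting, five-arm event with universal exponent $2$ for $N(j,c)$, six-arm event with exponent $>2$ for $N^{(3)}(j,c)$, then Markov and Borel--Cantelli). Your identification of the arm events and of the one genuinely delicate step (the topological verification that the open arm lies between the closed ones in the correct cyclic order) is accurate.
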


	The following deterministic lemma is a simplified version of  \cite[Lemma~7]{KSZ}. For any sets of vertices $S,S'$, we write $d(S,S')$ for $\min\{\|x-y\| : x \in S, y \in S'\}$.
	\begin{lem}
		\label{lem:separate}
		Let $\mathcal{C}_{k+1}$ and $\mathcal{C}_{k+2}$ be two successive circuits from the sequence of Lemma~\ref{lem: outermost}. Assume that $2^j \leq \diam(\mathcal{C}_{k+1}) < 2^{j+1}$. Let $v_1^{(k+1)}, v_2^{(k+1)}, \ldots, v_M^{(k+1)}$ be $M$ arbitrary vertices of $\mathcal{C}_{k+1}$ for which
		\begin{equation}
		\|v_p^{(k+1)} - v_q^{(k+1)}\| \geq 8\cdot 2^{cj},  \text{ for all } p,q \text{ with } p\neq q.
		\end{equation}
		If $N(j, c)\leq j^2$, then at least $M-j^2$ of the vertices $v_m^{(k+1)}$ satisfy $d(v_m^{(k+1)}, \mathcal{C}_{k+2}) > 2^{cj}$.
	\end{lem}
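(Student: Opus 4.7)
The plan is to reduce the lemma to a counting argument: I will define a map from the subset of indices $\{m : d(v_m^{(k+1)}, \mathcal{C}_{k+2}) \leq 2^{cj}\}$ into the set of squares counted by $N(j,c)$, and verify that this map is injective. The hypothesis $N(j,c) \leq j^2$ will then bound the ``close'' set by $j^2$, leaving at least $M - j^2$ indices with $d(v_m^{(k+1)}, \mathcal{C}_{k+2}) > 2^{cj}$, as required.

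For the assignment, I would associate to $v_m^{(k+1)} = (x_m, y_m)$ the pair
\[
(r_m, s_m) := \bigl(\lfloor x_m/2^{c(j+1)}\rfloor - 1,\ \lfloor y_m/2^{c(j+1)}\rfloor - 1\bigr).
\]
The $-1$ shift places $v_m^{(k+1)}$ in the central $2^{c(j+1)} \times 2^{c(j+1)}$ subsquare of $\tau(r_m, s_m)$, so its $\ell^\infty$-distance to $\partial \tau(r_m, s_m)$ is at least $2^{c(j+1)} = 2 \cdot 2^{cj}$; consequently the closed Euclidean ball of radius $2^{cj}$ around $v_m^{(k+1)}$ lies inside $\tau(r_m, s_m)$. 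Hence $\tau(r_m, s_m)$ meets $\mathcal{C}_{k+1}$ at $v_m^{(k+1)}$ and meets $\mathcal{C}_{k+2}$ at a vertex within Euclidean distance $2^{cj}$ of $v_m^{(k+1)}$ provided by the closeness hypothesis. It also meets $B(2^{j+1})$, because $0 \in \mathring{\mathcal{C}}_{k+1}$ implies that the opposite ray from $v_m^{(k+1)}$ through $0$ exits $\mathring{\mathcal{C}}_{k+1}$ at some $w \in \mathcal{C}_{k+1}$ with $0$ between $v_m^{(k+1)}$ and $w$, so $\|v_m^{(k+1)}\| \leq \|v_m^{(k+1)} - w\| \leq \diam(\mathcal{C}_{k+1}) < 2^{j+1}$, and hence $v_m^{(k+1)} \in B(2^{j+1})$.

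For injectivity, if $(r_p, s_p) = (r_q, s_q)$ with $p \neq q$, then the $x$- and $y$-coordinates of $v_p^{(k+1)}, v_q^{(k+1)}$ share the same floor, giving $\|v_p^{(k+1)} - v_q^{(k+1)}\| < \sqrt{2}\cdot 2^{c(j+1)} = 2\sqrt{2}\cdot 2^{cj} < 8 \cdot 2^{cj}$, contradicting the spacing hypothesis. I expect no substantial obstacle beyond this elementary geometric calibration: the constant $8$ in the spacing hypothesis is chosen precisely so that the two numerical constraints --- $2 \cdot 2^{cj} > 2^{cj}$ for the clearance absorbing the nearby vertex of $\mathcal{C}_{k+2}$, and $2\sqrt{2}\cdot 2^{cj} < 8 \cdot 2^{cj}$ for injectivity --- are simultaneously met for every $c \in (0,1)$.
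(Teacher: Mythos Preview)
Your proposal is correct and follows exactly the counting argument underlying \cite[Lemma~7]{KSZ}, which the paper cites in lieu of a proof: associate to each ``close'' vertex a square $\tau(r,s)$ that is forced to intersect $B(2^{j+1})$, $\mathcal{C}_{k+1}$, and $\mathcal{C}_{k+2}$, and use the spacing hypothesis for injectivity. The only place to tighten is the verification that $v_m^{(k+1)}\in B(2^{j+1})$: your ray argument implicitly treats $\mathcal{C}_{k+1}$ as a continuous Jordan curve, so the exit point $w$ lies on an edge rather than at a vertex, but convexity of the norm gives $\|v_m^{(k+1)}-w\|\le\diam(\mathcal{C}_{k+1})$ anyway, and the conclusion stands.
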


	We will also define ``good circuits,'' which will allow us in Section~\ref{sec: construction} to construct an infinite path starting at zero whose intersection with those circuits have low weights. 
	Let $\Gamma$ be a circuit surrounding the origin with $2^j \leq \diam(\Gamma) < 2^{j+1}$. For constants $c_1, c_3\in (0,1)$, we consider open connected sets $\mathcal{D}$ with some or all of the following properties:
	\begin{equation}
	\label{eq:4.1}
	\textnormal{$\mathcal{D} \subseteq \mathring{\Gamma}$ and $\mathcal{D}$ contains exactly one vertex adjacent to $\Gamma$;}
	\end{equation}
	\begin{equation}
	\label{eq:4.2}
	\diam(\mathcal{D}) \geq (\diam(\Gamma))^{c_1};
	\end{equation}
	\begin{align}
	\label{eq:4.3}
	&\textnormal{the open cluster of $\mathcal{D}$ in $\mathring{\Gamma}$ contains at least $(\diam(\Gamma))^{c_2}$ vertices} \nonumber\\
	&\textnormal{$w_m$ which are adjacent to $\Gamma$ and satisfy $\|w_p-w_q\| \geq (\diam(\Gamma))^{c_1c_3}$} \nonumber\\
	&\textnormal{for $p\neq q$; moreover, there exists a vertex $z \in \mathcal{D}$ and for each of} \nonumber\\
	&\textnormal{the $w_m$ an open path from $z$ to $w_m$ such that only its endpoint} \nonumber\\
	&\textnormal{$w_m$ is adjacent to $\Gamma$.}
	\end{align}
	Here, the open cluster of $\mathcal{D}$ in $\mathring{\Gamma}$ is the largest open connected set in $\mathring{\Gamma}$ that contains $\mathcal{D}$. 
	We say a closed circuit $\Gamma$ is $(c_1, c_2, c_3)$-good if any open connected set $\mathcal{D}$ with \eqref{eq:4.1} and \eqref{eq:4.2} also satisfies \eqref{eq:4.3}. Note that if $\Gamma$ is $(c_1,c_2,c_3)$-good, then for any $\hat c_2 \leq c_2$, it is also $(c_1,\hat c_2,c_3)$-good.
	\begin{lem}
	\label{lem:good}
	For any $c_1, c_3\in (0,1)$ with $c_1$ sufficiently close to $1$, we can choose $c_2\in (0,1)$, depending only on $c_1$ and $c_3$, such that
	\begin{equation}
	\label{eq: good_circuits}
	\prob(\exists \textnormal{ a circuit $\mathcal{C}_k$ with $2^j \leq\diam(\mathcal{C}_k) < 2^{j+1}$ which is not $(c_1, c_2, c_3)$-good}) \leq C_5 e^{-c_5j},
	\end{equation}
	where $C_5, c_5$ depend only on $c_1, c_3$. Moreover, given such $c_i$'s, we can choose $c_1', c_2', c_3' \in (0,1)$ with $c_1'c_3'> c_1$ and $c_2'=c_2$ such that \eqref{eq: good_circuits} holds with $c_i$ replaced by $c_i'$, $i=1,2,3$.
	\end{lem}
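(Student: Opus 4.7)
The plan is to follow the framework of \cite{KSZ}, adapted to vertex percolation on the triangular lattice and to the one-sided (closed-circuit) version of ``goodness.'' The target at each dyadic scale $j$ is a bound of the form $Ce^{-cj}$ on the probability of a bad circuit. I would first reduce to a fixed circuit using the standard outermost-closed conditioning: for any specific self-avoiding circuit $\gamma$ surrounding $0$, the event that $\gamma$ appears as some $\mathcal{C}_k$ is measurable with respect to the weights on $\gamma$ and outside $\mathring{\gamma}$, and conditional on this event, the weights strictly inside $\mathring{\gamma}$ remain i.i.d.\ with the unconditional law. A direct union bound over candidate $\gamma$ is too expensive, so one discretizes following \cite{KSZ}: replace $\gamma$ by a coarse approximation living in a polynomially-sized set of sub-annuli, so that only polynomially many configurations must be considered.

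For fixed closed $\gamma$ with $\diam(\gamma) \in [2^j, 2^{j+1})$, partition $\gamma$ into $M \asymp 2^{j(1-c_1 c_3)}$ consecutive arcs $A_1, \ldots, A_M$ of arclength approximately $2^{c_1 c_3 j}$, and let $R_i$ denote the sector of $\mathring{\gamma}$ within distance $2^{c_1 j}$ of $A_i$. After mild shrinking, the $R_i$ are pairwise disjoint. Apply RSW in the annular region of points at depth between $2^{c_1 j}/2$ and $2^{c_1 j}$ from $\gamma$ (which has aspect ratio bounded away from zero by the geometry of $\gamma$): with probability $1-\exp(-c\, 2^{(1-c_1)j})$ there exists an open circuit $\Sigma$ around $0$ lying in this annulus.

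For each $i$, let $G_i$ be the event that inside $R_i$ there is an open path from $\Sigma \cap R_i$ to a vertex $w_i$ adjacent to $A_i$, using $\gamma$ only at its endpoint. A standard RSW construction in $R_i$ yields $\prob(G_i) \geq p_0$ for some $p_0 = p_0(c_1, c_3) > 0$. After fixing $\Sigma$ in a thin annulus and restricting the inputs of $G_i$ to the portions of $R_i$ outside a thickening of $\Sigma$, the $G_i$ become conditionally independent, and Hoeffding gives $N := |\{i : G_i \text{ occurs}\}| \geq p_0 M/2$ with probability $1 - \exp(-c\, 2^{j(1-c_1 c_3)})$. On the intersection of these events, any $\mathcal{D}$ satisfying \eqref{eq:4.1}, \eqref{eq:4.2} has diameter at least $2^{c_1 j}$ and contains a vertex adjacent to $\gamma$, forcing the open cluster of $\mathcal{D}$ in $\mathring{\gamma}$ to merge with $\Sigma$. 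This cluster then contains at least $p_0 M/2$ of the $w_m$ from the successful $G_i$'s, pairwise at distance $\geq 2^{c_1 c_3 j} = \diam(\gamma)^{c_1 c_3}$. Choose $c_2 \in (0, \min\{1-c_1 c_3, 1-c_1\})$ small enough that $p_0 M/2 \geq \diam(\gamma)^{c_2}$ for large $j$; this verifies the goodness definition.

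For the second claim, the stricter constraint $c_2 < 1-c_1$ (rather than only $c_2 < 1-c_1 c_3$) is imposed precisely so that one may select $c_1', c_3' \in (0,1)$, both close to $1$, satisfying $c_1 < c_1' c_3' < 1-c_2$. The first inequality gives the stated $c_1' c_3' > c_1$; the second guarantees that $c_2' := c_2$ is compatible with re-applying the first part to the triple $(c_1', c_2, c_3')$. The main technical obstacle is the twofold uniformity: over the random circuit $\gamma$ (handled by the coarse-graining of \cite{KSZ}) and over the uncountable family of admissible sets $\mathcal{D}$. The latter is resolved by making the inner skeleton $\Sigma$ \emph{universal} --- defined independently of $\mathcal{D}$, so that any $\mathcal{D}$ of the requisite diameter is automatically absorbed into $\Sigma$'s open cluster. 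Ensuring sufficient independence among the $G_i$ despite their shared dependence on $\Sigma$ is the residual delicate point, addressed by conditional RSW on a thin annulus containing $\Sigma$.
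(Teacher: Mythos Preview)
The paper's own proof is a one–line citation: it observes that the definition of ``good'' used here is weaker than that of \cite{KSZ} (their version requires long open paths $\theta_m$ rather than single vertices $w_m$), so \eqref{eq: good_circuits} is an immediate consequence of \cite[Proposition~1]{KSZ}. You instead sketch a self-contained argument. That is a reasonable choice, and the overall architecture --- coarse-grain $\gamma$, build local open connections in disjoint sectors, apply a large-deviation bound on the number of successful sectors, then tune $c_2$ --- is indeed in the spirit of \cite{KSZ}. Your treatment of the ``moreover'' clause (choosing $c_2<1-c_1$ so that there is room for $c_1'c_3'\in(c_1,1-c_2)$, and then reusing the same $c_2$) is also fine.

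There is, however, a genuine gap in the heart of the argument. You assert that any open set $\mathcal{D}$ satisfying \eqref{eq:4.1}--\eqref{eq:4.2} must have its open cluster in $\mathring\gamma$ merge with the universal open circuit $\Sigma$. This does not follow from $\diam(\mathcal{D})\geq 2^{c_1 j}$. The region between $\gamma$ and $\Sigma$ has ``thickness'' at most $2^{c_1 j}$, but it can have arbitrarily large diameter (comparable to $\diam\gamma\sim 2^j$). Since $\mathcal{D}$ is only required to touch $\gamma$ at a single vertex, nothing prevents $\mathcal{D}$ from being a long open arc that winds along $\gamma$, staying at depth $<2^{c_1 j}/2$ throughout and never meeting $\Sigma$. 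In that case you have produced neither an intersection $\mathcal{D}\cap\Sigma$ nor an open path joining them, so the ``universal skeleton'' $\Sigma$ gives you nothing for that particular $\mathcal{D}$. A secondary issue is the existence of $\Sigma$ itself: for an arbitrary closed circuit $\gamma$ the set $\{x\in\mathring\gamma: 2^{c_1 j-1}<d(x,\gamma)<2^{c_1 j}\}$ need not be a topological annulus surrounding $0$ with bounded modulus (think of $\gamma$ with narrow necks, or with $0$ very close to $\gamma$), so the bare RSW claim ``with probability $1-\exp(-c\,2^{(1-c_1)j})$ there is an open circuit $\Sigma$ around $0$ in this region'' needs justification you have not supplied.

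The fix is exactly what \cite{KSZ} does and what the paper is citing: rather than a single global $\Sigma$, one works locally near the contact point of $\mathcal{D}$ with $\gamma$, using RSW in boxes at scale $2^{c_1 j}$ to spread the open cluster of $\mathcal{D}$ along $\gamma$. This makes the construction depend on (a coarse-grained version of) the location of $\mathcal{D}$, which is why the uniformity over $\mathcal{D}$ is handled by a union bound over polynomially many coarse locations, not by a universal object. If you want to keep the proof self-contained you should replace the $\Sigma$-merging step by this local argument; otherwise, citing \cite[Proposition~1]{KSZ} as the paper does is the cleanest route.
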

	\begin{proof}
		In \cite{KSZ}, the authors define a circuit $\Gamma$ to be good if any open connected set $\mathcal{D}$ with \eqref{eq:4.1} and \eqref{eq:4.2} also satisfies the following (instead of \eqref{eq:4.3}):
		\begin{itemize}
			\item[] the open cluster of $\mathcal{D}$ in $\mathring{\Gamma}$ contains at least $(\diam(\Gamma))^{c_2}$ self-avoiding paths $\theta_m$ which are adjacent to $\Gamma$, have length $\geq c_6\log\log(\diam(\Gamma))$ (where $c_6>0$ is a constant) and satisfy $d(\theta_p, \theta_q) \geq (\diam(\Gamma))^{c_1c_3}$ for $p\neq q$; moreover, there exists a vertex $z\in\mathcal{D}$ and for each of the $\theta_m$ an open path from $z$ to $\theta_m$ such that only its endpoint on $\theta_m$ is adjacent to $\Gamma$.
		\end{itemize}
		Since the above requires more than \eqref{eq:4.3}, by \cite[Proposition~1]{KSZ}, we obtain \eqref{eq: good_circuits}.
	\end{proof}

	\section{Construction of a low-weight path}\label{sec: construction}
	Let $(\mathcal{C}_i)$ be fixed as $(\Gamma_i)$ for some sequence of circuits with $\Gamma_i \subseteq \mathring{\Gamma}_{i+1}$. We would like to construct a self-avoiding path from $0$ such that (except for finitely many vertices) it contains only open vertices or some $v^{(k)}\in \Gamma_k$ with $v^{(k)}$ being of low weight. For the definition of low weight, let $c_2\in (0,1)$ (which will be taken to be the same as the $c_2$ in \eqref{eq:4.3}) and recall that $I = \inf\{x>0: F(x) > p_c\}$.
	\begin{df}\label{def: low_weight}
		Let $\Gamma$ be a circuit such that $0 \in \mathring{\Gamma}$ with $2^j\leq \diam(\Gamma) <2^{j+1}$.
		\begin{enumerate}
			\item If $\prob(t_v = I) > 0$ and $I>0$, we say that a vertex $v\in \Gamma$ is of ($j$-)low-weight if $t_v = I$ and we define $a_j=0$ for all $j$.
			\item If $\prob(t_v = I) = 0$ or $I=0$, we fix any non-increasing sequence $(a_j)$ such that $\prob(I < t_v \leq I+a_j) \geq 2^{-c_2j/2 - 1}$ and $a_j\to 0$ as $j\to\infty$, and we say that a vertex $v\in \Gamma$ is of ($j$-)low-weight if $I < t_v \leq I+a_j$.
		\end{enumerate}
	\end{df}

	For the following statement, given a configuration of open/closed vertices, let $\bar{\prob}$ be the (regular) conditional distribution of the variables $(\omega_x)$ given this configuration. 
	\begin{lem}
		\label{lem: induction}
		Choose $c_1,c_3$ and $c_1',c_3'$ in $(0,1)$ with corresponding $c_2=c_2'$ as dictated by Lemma~\ref{lem:good}, and both $c \in (c_1,c_1'c_3')$ and $\hat c \in (0,c_1c_3)$. Also fix $C_6, C_7, c_7>0$. There exists $c_{8}>0$ such that the following holds for all sufficiently large $k$. For a given configuration $\eta$ of open/closed vertices, suppose $2^j \leq \diam(\Gamma_{k+1}) < 2^{j+1}$ and $v^{(k)} \in \Gamma_k$ satisfies
  \begin{equation}\label{eq: initial_assumption}
  d(v^{(k)}, \Gamma_{k+1}) \geq 2(\diam(\Gamma_{k+1}))^{c_1'}.
  \end{equation}
Assume that
  \begin{itemize}
  \item $N(i,c) \leq i^2$, $N(i,\hat c) \leq i^2$, and $N^{(3)}(i,c_1') = 0$ for all $i \geq j$,
  \item one has
  \begin{equation}\label{eq:circuit_size}
  \diam(\Gamma_{i+2}) \leq (i+1)^{C_6}\diam(\Gamma_{i+1}) \text{ for all }i \geq k,
  \end{equation}
  \item one has
  \begin{equation}\label{eq: growth_bound}
  c_7i \leq \log(\diam(\Gamma_{i+2})) \leq C_7i \text{ for all }i \geq k,
  \end{equation}
  \item $\Gamma_{i+1}$ and $\Gamma_{i+2}$ are $(c_1,c_2,c_3)$- and $(c_1',c_2',c_3')$-good for all $i \geq k$.
  \end{itemize}
  With $\bar{\prob}$-probability at least $1-e^{-c_8j}$, (conditioned on $\eta$) we can find sequences $(v^{(i)})_{i \geq k+1}$ and $(\mathcal{D}^{(i)})_{i \geq k}$ such that for all $i$,
  \begin{enumerate}
  \item for $i \geq k$, $v^{(i)} \in \Gamma_i$ and $\mathcal{D}^{(i)}$ is an open path connecting a neighbor of $v^{(i)}$ with a neighbor of $v^{(i+1)}$;
  \item for $i \geq k+1$, $v^{(i)}$ is of low weight; and
  \item for $i \geq k$, $\mathcal{D}^{(i)}$ contains only one vertex adjacent to $\Gamma_{i+1}$.
  \end{enumerate}
	\end{lem}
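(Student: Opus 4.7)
The plan is to proceed by induction on $i \geq k$, maintaining the invariant $d(v^{(i)}, \Gamma_{i+1}) \geq 2(\diam \Gamma_{i+1})^{c_1'}$ at each step; the base case $i=k$ is the assumption \eqref{eq: initial_assumption}. At stage $i$, given such a $v^{(i)}$, we first produce an open connected set $\mathcal{D}_0$ to play the role of $\mathcal{D}$ in the good-circuit property of $\Gamma_{i+1}$. Since $v^{(i)}$ lies in $\mathring{\Gamma}_{i+1}$ at distance at least $2(\diam \Gamma_{i+1})^{c_1'}$ from $\Gamma_{i+1}$, and since (implicitly applying the lemma with $\Gamma_i = \mathcal{C}_i$) the annulus $\mathring{\Gamma}_{i+1} \setminus \mathring{\Gamma}_i$ contains no closed circuit surrounding $0$, site-percolation duality produces an open path from a neighbor of $v^{(i)}$ to a vertex adjacent to $\Gamma_{i+1}$; truncating at the first such adjacent vertex yields $\mathcal{D}_0$ satisfying \eqref{eq:4.1} with $\diam \mathcal{D}_0 \geq 2(\diam \Gamma_{i+1})^{c_1'}$.

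Applying the $(c_1', c_2', c_3')$-good property of $\Gamma_{i+1}$ to $\mathcal{D}_0$ produces $M \geq (\diam \Gamma_{i+1})^{c_2}$ open vertices $w_1, \dots, w_M$ adjacent to $\Gamma_{i+1}$, pairwise at distance at least $(\diam \Gamma_{i+1})^{c_1' c_3'}$, each connected to a common $z \in \mathcal{D}_0$ by an open path meeting $\Gamma_{i+1}$ only at $w_m$. Let $v_m^{(i+1)}$ be a fixed neighbor of $w_m$ on $\Gamma_{i+1}$; any choice $v^{(i+1)} := v_m^{(i+1)}$ produces an admissible $\mathcal{D}^{(i)}$ by concatenating the initial segment of $\mathcal{D}_0$ up to $z$ with the open path from $z$ to $w_m$. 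Since $c < c_1' c_3'$, the candidates are separated by at least $8 \cdot 2^{c j_i}$ for large $j_i$ (with $2^{j_i} \leq \diam \Gamma_{i+1} < 2^{j_i + 1}$), so Lemma~\ref{lem:separate} with parameter $c$ (using $N(j_i, c) \leq j_i^2$) shows that all but at most $j_i^2$ of the $v_m^{(i+1)}$ satisfy $d(v_m^{(i+1)}, \Gamma_{i+2}) > 2^{c j_i}$. To upgrade this into the invariant $d(v_m^{(i+1)}, \Gamma_{i+2}) \geq 2(\diam \Gamma_{i+2})^{c_1'}$ at stage $i+1$, I would split into cases according to whether $\diam \Gamma_{i+2}$ is comparable to or much larger than $\diam \Gamma_{i+1}$. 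In the comparable case, \eqref{eq:circuit_size} lets the bound from Lemma~\ref{lem:separate} already dominate; in the other, $N^{(3)}(i, c_1') = 0$ prevents any square of side $2^{c_1'(j_{i+1}'+1)}$ from meeting all three of $\Gamma_i, \Gamma_{i+1}, \Gamma_{i+2}$, forcing local separation of $\Gamma_{i+1}$ from $\Gamma_{i+2}$ wherever $\Gamma_i$ is nearby, with $N(i, \hat c) \leq i^2$ and $\hat c < c_1 c_3$ providing a backup at the coarser scale.

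For the low-weight step, I condition on $\eta$ and use that the weights at the (closed) candidate vertices $v_m^{(i+1)}$ are independent under $\bar{\prob}$. By Definition~\ref{def: low_weight}, each is low-weight with conditional probability at least $2^{-c_2 j_i/2}$ (a positive constant in the atomic case). With at least $(\diam \Gamma_{i+1})^{c_2}/2 \geq 2^{c_2 j_i - 1}$ surviving candidates, the probability that none is low-weight is at most $(1 - 2^{-c_2 j_i/2})^{2^{c_2 j_i - 1}} \leq \exp(-2^{c_2 j_i/2 - 1})$. Summing over $i \geq k$ and using $j_i \geq c_7 i / \log 2$ from \eqref{eq: growth_bound}, the total failure probability is dominated by $e^{-c_8 j}$ for some $c_8 > 0$ and all sufficiently large $k$. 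The main obstacle is the geometric case analysis in the previous paragraph: the basic output of Lemma~\ref{lem:separate} with parameter $c$ is only $2^{c j_i} \approx (\diam \Gamma_{i+1})^c$, whereas the invariant at the next stage demands $2(\diam \Gamma_{i+2})^{c_1'}$, and matching these requires a delicate interplay of the conditions $N(i, c), N(i, \hat c) \leq i^2$, $N^{(3)}(i, c_1') = 0$, and the growth bounds \eqref{eq:circuit_size}--\eqref{eq: growth_bound}, exploiting in particular that a $c_1'$-scale portion of $\Gamma_{i+2}$ cannot sit close to $\Gamma_{i+1}$ without creating a forbidden three-circuit coincidence with $\Gamma_i$.
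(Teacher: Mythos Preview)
Your overall architecture is right, but the central inductive step has a genuine gap: you cannot maintain the invariant $d(v^{(i)},\Gamma_{i+1}) \geq 2(\diam\Gamma_{i+1})^{c_1'}$ in the way you describe. The output of Lemma~\ref{lem:separate} with parameter $c$ is $d(v_m^{(i+1)},\Gamma_{i+2}) > 2^{cj_i} \asymp (\diam\Gamma_{i+1})^{c}$, and since $c < c_1'c_3' < c_1'$, this is \emph{smaller} than $2(\diam\Gamma_{i+2})^{c_1'}$ even when $\diam\Gamma_{i+2}$ and $\diam\Gamma_{i+1}$ are comparable. So your ``comparable case'' does not work. Your proposed $N^{(3)}$ rescue also does not close the gap as stated: to force three consecutive circuits into a small square near $v^{(i+1)}$ you would need $\Gamma_i$ to be close to $v^{(i+1)}$, but the open path $\mathcal D^{(i)}$ from $v^{(i)}\in\Gamma_i$ to $v^{(i+1)}$ can be long, so nothing places $\Gamma_i$ nearby.

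The paper resolves this by maintaining only the weaker invariant $d(v^{(i)},\Gamma_{i+1}) \geq 2(\diam\Gamma_{i+1})^{c_1}$ (this \emph{is} achievable from Lemma~\ref{lem:separate} with parameter $c>c_1$ plus \eqref{eq:circuit_size}) and then recovering via a two-stage step. From $v^{(k+1)}$ with the $c_1$-bound, one first uses the $(c_1,c_2,c_3)$-good property and parameter $\hat c$ to produce a preliminary path $\mathcal D^{(k+1)}$ and vertex $v^{(k+2)}$. Then one runs a dichotomy on $\diam\mathcal D^{(k+1)}$: if $\diam\mathcal D^{(k+1)} \geq (\diam\Gamma_{k+2})^{c_1'}$, one redoes the construction using the $(c_1',c_2',c_3')$-good property of $\Gamma_{k+2}$ and parameter $c$, directly recovering the $c_1$-bound at the next level; if instead $\diam\mathcal D^{(k+1)} < (\diam\Gamma_{k+2})^{c_1'}$, then $\|v^{(k+1)}-v^{(k+2)}\|$ is small, and failure of the $c_1$-bound at $v^{(k+2)}$ would place $\Gamma_{k+1},\Gamma_{k+2},\Gamma_{k+3}$ all inside a single $\tau$-square of scale $c_1'$, contradicting $N^{(3)}(\cdot,c_1')=0$. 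Note that the three-circuit coincidence involves $\Gamma_{k+1},\Gamma_{k+2},\Gamma_{k+3}$ (looking two steps ahead), not $\Gamma_i,\Gamma_{i+1},\Gamma_{i+2}$ as in your sketch. Your low-weight argument and the final summation of failure probabilities are fine.
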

	
	\begin{proof}
	The proof is similar to the construction of double paths in \cite[Section~5]{KSZ}. We will first construct $v^{(k+1)}$ and $\mathcal{D}^{(k)}$. Because $v^{(k)} \in \Gamma_k$, there exists an open path $\hat{\mathcal{D}}^{(k)}$ from a neighbor of $v^{(k)}$ to a neighbor of $\Gamma_{k+1}$ that only contains one vertex adjacent to $\Gamma_{k+1}$. Due to \eqref{eq: initial_assumption}, we have
	\[
	\diam(\hat{\mathcal{D}}^{(k)}) \geq 2(\diam(\Gamma_{k+1}))^{c_1'} - 4 \geq (\diam(\Gamma_{k+1}))^{c_1'}.
	\]
Since $\Gamma_{k+1}$ is $(c_1', c_2', c_3')$-good, there are at least $(\diam(\Gamma_{k+1}))^{c_2'} \geq 2^{c_2'j}$ many vertices $w_m$ in the open cluster of $\hat{\mathcal{D}}^{(k)}$ which are adjacent to $\Gamma_{k+1}$ and satisfy $\|w_p-w_q\| \geq (\diam(\Gamma_{k+1}))^{c_1'c_3'}$ if $p\neq q$. This implies (for $k$ large) $\|w_p - w_q\| \geq 16(\diam(\Gamma_{k+1}))^c$ if $p\neq q$. So if we choose $(v_m^{(k+1)})_m$ as vertices in $\Gamma_{k+1}$ adjacent to the $w_m$'s (in some deterministic and $\eta$-measurable way), then if $p \neq q$,
	\[
	\|v_p^{(k+1)}-v_q^{(k+1)}\| \geq 8(\text{diam}(\Gamma_{k+1}))^c \geq 8 \cdot 2^{cj}.
	\]
	Since $N(c,j) \leq j^2$, by Lemma~\ref{lem:separate}, at least $2^{c_2'j}-j^2$ of the $v_m^{(k+1)}$'s satisfy $d(v_m^{(k+1)},\Gamma_{k+2}) > 2^{cj}$. We claim that with conditional probability at least $1-e^{-c_{9}j}$, more than $j^2$ of the $v_m^{(k+1)}$'s have low weight. To see why, we use the Chernoff bound: letting $X_1, \dots, X_r$ be i.i.d. Bernoulli random variables with parameter $2^{-c_2'j/2}$, and $r=2^{c_2'j}$, then
	\begin{align*}
	\bar{\prob}(\text{at most }j^2 \text{ of the } t_{v_m^{(k+1)}}\text{'s have low weight}) &\leq \prob(X_1 + \dots + X_r \leq j^2) \\
	&= \prob(\exp(-(X_1 + \dots + X_r)) \geq e^{-j^2}) \\
	&\leq e^{j^2} \left[ \mathbf{E} \exp(-X_1)\right]^r \\
	&=e^{j^2} \left[ (1-2^{-c_2'j/2}) + e^{-1}2^{-c_2'j/2}\right]^{2^{c_2'j}} \\
	&\leq e^{-c_{10} 2^{c_{11}j}}.
	\end{align*}
	(Here, we have assumed that item 2 of Definition~\ref{def: low_weight} holds; otherwise, the proof is even easier.) This shows the claim. When it holds (that is, more than $j^2$ of the vertices have low weight), we say that ``the first stage is successful.'' Note that given $\eta$, the outcome of the first stage depends only on the weights for vertices on $\Gamma_{k+1}$.
	
	Assuming that the first stage is successful, we can choose $v^{(k+1)} \in \{v_m^{(k+1)}\}$ such that both conditions hold:
\[
	d(v^{(k+1)}, \Gamma_{k+2}) > 2^{cj} \text{ and } v^{(k+1)} \text{ is of low-weight}.
\]
	This implies by \eqref{eq:circuit_size} that for large $k$,
	\begin{equation}\label{eq: boston_lasagna}
	d(v^{(k+1)}, \Gamma_{k+2}) \geq 2(\diam(\Gamma_{k+2}))^{c_1}.
	\end{equation}
	Last, we define $\mathcal{D}^{(k)}$ by modifying $\hat{\mathcal{D}}^{(k)}$ so that it begins at a neighbor of $v^{(k)}$ and ends at a neighbor of $v^{(k+1)}$ (with only one vertex adjacent to $\Gamma_{k+1}$).
	
	Now we construct the further paths $\mathcal{D}^{(k+1)}, \mathcal{D}^{(k+2)}, \dots$ and $v^{(k+2)},v^{(k+3)}, \dots$. Inequality \eqref{eq: boston_lasagna} means we can find an open path $\hat{\mathcal{D}}^{(k+1)}$ connecting a neighbor of $v^{(k+1)}$ with a neighbor of $\Gamma_{k+2}$, with only one vertex adjacent to $\Gamma_{k+2}$, such that
	\[
	\diam(\hat{\mathcal{D}}^{(k+1)}) \geq 2(\diam(\Gamma_{k+2}))^{c_1}-4 \geq (\diam(\Gamma_{k+2}))^{c_1}.
	\]
	Therefore we can repeat the argument leading to \eqref{eq: boston_lasagna} with $\hat{\mathcal{D}}^{(k+1)}$ in place of $\mathcal{D}^{(k)}$, using now that $\Gamma_{k+2}$ is $(c_1,c_2,c_3)$-good (and putting $\hat c$ in place of $c$), to construct an open path $\mathcal{D}^{(k+1)}$ from a neighbor of $v^{(k+1)}$ to a neighbor of some $v^{(k+2)} \in \Gamma_{k+2}$ of low weight that has only one vertex adjacent to $\Gamma_{k+2}$. Again, we will only be able to do this with conditional probability $\geq 1-\exp\left(-c_{9}\lfloor\log_2 \diam(\Gamma_{k+2})\rfloor\right)$, given both $\eta$ and the outcome of the first stage. (Note that conditioning on the outcome of the first stage only gives information about the weights on $\Gamma_{k+1}$.) If we are able to find such $\hat{\mathcal{D}}^{(k+1)}$ and $v^{(k+2)}$, then we say that ``the stage 2a is successful.''
	
	Unfortunately the argument above only gives $d(v^{(k+2)},\Gamma_{k+3}) \geq 2(\diam(\Gamma_{k+3}))^\alpha$ for some $\alpha<c_1$ and this is not enough to iterate the argument (the estimate will continue to deteriorate at each further iteration). We now claim that we can choose $\mathcal{D}^{(k+1)}$ and $v^{(k+2)}$ such that
	\begin{equation}\label{eq: inductive_step}
	d(v^{(k+2)},\Gamma_{k+3}) \geq 2(\diam(\Gamma_{k+3}))^{c_1}.
	\end{equation}
	
	To show \eqref{eq: inductive_step}, we argue as follows. If it so happens that $\diam(\mathcal{D}^{(k+1)}) \geq (\diam(\Gamma_{k+2}))^{c_1'}$, then we repeat the argument leading to \eqref{eq: boston_lasagna} with $\mathcal{D}^{(k+1)}$ in place of $\mathcal{D}^{(k)}$ (and the same value of $c$) to produce yet another open path $\widetilde{\mathcal{D}}^{(k+1)}$ connecting a neighbor of $v^{(k+1)}$ with a neighbor of some $v^{(k+2)} \in \Gamma_{k+2}$ of low weight, with only one vertex adjacent to $\Gamma_{k+2}$, but this time we will have the estimate \eqref{eq: inductive_step} using $\widetilde{\mathcal{D}}^{(k+1)}$ in place of $\mathcal{D}^{(k+1)}$. The conditional probability that we can find such $\widetilde{\mathcal{D}}^{(k+1)}$ and $v^{(k+2)}$ is again at least $1-\exp\left(-c_{9}\lfloor \log_2 \diam(\Gamma_{k+2})\rfloor\right)$. (If we can find such a path and vertex, we say that ``the stage 2b is successful.'') Otherwise, we must have
	\[
	\diam(\mathcal{D}^{(k+1)}) < (\diam(\Gamma_{k+2}))^{c_1'},
	\]
	and so
	\begin{equation}
	\label{eq:successive_v}
	\|v^{(k+1)} - v^{(k+2)}\| \leq \diam(\mathcal{D}^{(k+1)}) + 4 < 2(\diam(\Gamma_{k+2}))^{c_1'}.
	\end{equation}
	In this case, we always declare stage 2b to be successful. If \eqref{eq: inductive_step} fails, for large $k$, we use \eqref{eq:circuit_size} to see that
	\[
	d(v^{(k+2)},\Gamma_{k+3}) < 2(\diam(\Gamma_{k+2}))^{c_1'}.
	\] 
	This, together with \eqref{eq:successive_v}, implies that each of $\Gamma_{k+1}$, $\Gamma_{k+2}$, $\Gamma_{k+3}$ must have a point in $v^{(k+2)} + B(2(\diam(\Gamma_{k+2}))^{c_1'})$. Letting $i$ be such that $2^i \leq \diam(\Gamma_{k+2}) < 2^{i+1}$, there exist $r$, $s$ so that
	\[
	v^{(k+2)} \in [r2^{c_1'(i+1)}, (r+1)2^{c_1'(i+1)}] \times [s2^{c_1'(i+1)}, (s+1)2^{c_1'(i+1)}].
	\]
	Therefore, if $q$ is chosen such that $c_1'q \geq 4$,
	\begin{align*}
	&\tau(\lfloor 2^{-c_1'q} (r-2)\rfloor, \lfloor 2^{-c_1'q} (s-2)\rfloor; c_1', i+q) \\
	&\supseteq [r2^{c_1'(i+1)}, (r+1)2^{c_1'(i+1)}] \times [s2^{c_1'(i+1)}, (s+1)2^{c_1'(i+1)}] + B(2\cdot 2^{c_1'(i+1)})
	\end{align*}
	would intersect $B(2^{i+1}) \subseteq B(2^{i+q+1})$, as well as $\Gamma_{k+1}$, $\Gamma_{k+2}$, and $\Gamma_{k+3}$. Since $\diam(\Gamma_{k+3}) \geq \diam(\Gamma_{k+2}) \geq 2^i$ and by \eqref{eq:circuit_size} and \eqref{eq: growth_bound}, $\diam(\Gamma_{k+1}) \geq (k+1)^{-C_6} \diam(\Gamma_{k+2}) \geq 2^{i+q - (\log{(i+q)})^2}$ for large $i$, we would have $N^{(3)}(i+q, c_1')\neq 0$, but this is impossible by the hypothesis. Therefore \eqref{eq: inductive_step} holds.
	
	At this point, we have constructed $\mathcal{D}^{(k)}, \mathcal{D}^{(k+1)}, v^{(k+1)},$ and $v^{(k+2)}$. The conditional probability that stages 1, 2a, 2b are all successful is at least
	\begin{align*}
	&\bar{\prob}(\text{stage 2b is successful} \mid \text{stages 2a and 1 are successful}) \\
	\times~& \bar{\prob}(\text{stage 2a is successful} \mid \text{stage 1 is successful}) \\
	\times~& \bar{\prob}(\text{stage 1 is successful}) \\
	\geq~& \left(1 - \exp\left(-c_{9}\lfloor \log_2 \diam(\Gamma_{k+2})\rfloor\right)\right)^2 (1-e^{-c_{9}j}).
	\end{align*}
	
	Now that we have constructed $v^{(k+2)}$ and $\mathcal{D}^{(k+1)}$ such that \eqref{eq: inductive_step} holds, we can now repeat the argument we just gave that derived \eqref{eq: inductive_step} from \eqref{eq: boston_lasagna}, but with $v^{(k+2)},v^{(k+3)}$ in place of $v^{(k+1)},v^{(k+2)}$ and $\mathcal{D}^{(k+1)},\mathcal{D}^{(k+2)}$ in place of $\mathcal{D}^{(k)},\mathcal{D}^{(k+1)}$. From this, we reproduce the estimate \eqref{eq: inductive_step} with $v^{(k+3)},\Gamma_{k+4}$ in place of $v^{(k+2)},\Gamma_{k+3}$, so long as the corresponding steps 2a and 2b (which we will label 3a and 3b) are successful. The probability that these stages 3a and 3b are both successful, conditioned on the success of stages 1, 2a, and 2b, is at least
	\[
	\left( 1 - \exp\left(-c_{9}\lfloor \log_2  \diam(\Gamma_{k+3})\rfloor\right)\right)^2.
	\]
	Continuing in this way, we produce all paths $\mathcal{D}^{(i)}$ and vertices $v^{(i)}$ with conditional probability at least
	\begin{align*}
	&(1-e^{-c_9j}) \prod_{i=k}^\infty \left( 1- \exp\left( -c_{9}\lfloor \log_2 \diam(\Gamma_{i+2})\rfloor\right)\right)^2 \\
	\geq~&(1-e^{-c_{9}j}) \prod_{i=k}^\infty \left( 1- \exp\left( - c_{9}c_{7} i /2\right)\right)^2 \\
	\geq~&(1-e^{-c_{9}j}) (1-e^{-c_{12}k}) \\
	\geq~&1-e^{-c_{8}j}.
	\end{align*}
	Here, we have used \eqref{eq: growth_bound} to go from the first to second line, and then \eqref{eq: growth_bound} again, along with the inequality $\diam(\Gamma_{k+1}) \geq 2^j$, to go from the third to fourth line. This completes the proof.
	\end{proof}
	
%
%

	\begin{rem}\label{rem: construction}
	In the statement of Lemma~\ref{lem: induction}, the vertex $v^{(k)}$ is assumed to be on $\Gamma_k$ and to obey the distance bound \eqref{eq: initial_assumption}. It is straightforward to check that these conditions may be replaced by the following: there is an open path starting at a neighbor of $v^{(k)}$ of diameter $(\diam (\Gamma_{k+1}))^{c_1'}$ that contains only one vertex adjacent to $\Gamma_{k+1}$. (Here $v^{(k)}$ is not assumed to be on $\Gamma_k$.) In this case, the result holds with the same conditional probability bound: at least $1-e^{-c_{8}\lfloor \log_2 \diam(\Gamma_{k+1})\rfloor}$.
	\end{rem}

	\section{Universality of the time constant (asymptotic form)}
	In this section, we prove \eqref{eq: lln}. We define $(t_x^\textnormal{B})_{x \in \mathbb{Z}^2}$ to be a family of Bernoulli random variables with probability $1/2$ of being $0$ and probability $1/2$ of being $I$. We couple $t_x$ and $t_x^\textnormal{B}$ together using $\omega_x$: we can use $t_x^\textnormal{B} = I\cdot \mathbf{1}_{\{\omega_x > 1/2\}}$.  We also write $T^\textnormal{B}$ for the first-passage time using the weights $(t_x^\textnormal{B})$. By \cite[Proposition~3.6]{Yao},
	\begin{equation}\label{eq: YAO}
	\lim_{n\to\infty} \frac{T^\textnormal{B}(0,\partial B(n))}{\log{n}} = \frac{I}{2\sqrt{3}\pi}\quad\text{almost surely.}
	\end{equation}
	
	We now construct an infinite path using Lemma~\ref{lem: induction}, so choose $c_1,c_3$ and $c_1',c_3'$ with corresponding $c_2=c_2'$ as dictated by Lemma~\ref{lem:good}. Also fix $C_6,C_7, c_7>0$, $c \in (c_1,c_1'c_3')$, and $\hat c \in (0,c_1c_3)$. For $j,k \geq 1$, let $\Xi_{j,k}$ be the set of $\eta$ for which there exists $v^{(k)}$ such that the hypotheses of Lemma~\ref{lem: induction} hold for $\eta,j,k,v^{(k)}$. (In particular, $2^j \leq \diam(\mathcal{C}_{k+1}) < 2^{j+1}$.) By the lemma, the probability that there exists an infinite path $\gamma$ starting at some vertex $v^{(k)}$ of $\mathcal{C}_k$  (for $k$ fixed and large) that is open except for its intersection with each $\mathcal{C}_i$ ($i \geq k$), which consists of a low-weight vertex, is at least 
	\begin{align}
	\sum_{j=1}^\infty \prob(\Xi_{j,k}) (1-e^{-c_{8}j}) &\geq \sum_{j=c_4 k / 2}^\infty \prob(\Xi_{j,k})(1-e^{-c_{8}j}) \nonumber \\
	&\geq (1-e^{-c_{8}c_4k/2}) \sum_{j=c_4k/2}^\infty \prob(\Xi_{j,k}). \label{eq: chocolate_quest_bar}
	\end{align}
	 
	
	By Lemma~\ref{lem:squares}, almost surely, there exists a (random) integer $j$ such that 
	\[\max\{N(i,c),N(i,\hat c), N(i, c_1')\}\leq i^2
	 \text{ and } N^{(3)}(i,c_1') = 0 \text{ for all } i\geq j.\] Also, by a minor adaptation of \cite[Eq.~(5.25)]{KSZ} and the above Lemma~\ref{lem: outermost}, the probability that \eqref{eq:circuit_size} and \eqref{eq: growth_bound} hold goes to one as $k \to \infty$, so long as $C_6$ and $C_{7}$ are large enough and $c_{7}$ is small enough. Last, by Lemma~\ref{lem:good}, the probability tends to one as $k\to\infty$ that for all $i \geq k$, $\mathcal{C}_{i+1}$ and $\mathcal{C}_{i+2}$ are $(c_1,c_2,c_3)$- and $(c_1',c_2',c_3')$-good. As for condition \eqref{eq: initial_assumption}, if it fails for our $k$, then we must have $N(i,c_1') > i^2$ for some $i \geq (\log_2 k)-1$. This has probability tending to zero as $k\to\infty$. These facts, in conjunction with the fact that $(\Xi_{j,k})_j$ is a disjoint family for each $k$, shows that $\prob(\cup_j \Xi_{j,k}) \to 1$ as $k \to \infty$. Therefore, to show that \eqref{eq: chocolate_quest_bar} tends to 1 as $k\to\infty$, we must show that $\sum_{j=1}^{c_4k/2} \prob(\Xi_{j,k}) \to 0$ as $k\to\infty$. But by disjointedness again,
	 \[
	 \sum_{j=1}^{c_4k/2} \prob(\Xi_{j,k}) \leq \prob(\diam(\mathcal{C}_{k+1}) \leq 2^{c_4k/2 +1}) \to 0 \text{ as } k \to\infty
	 \]
	 by \eqref{eq: outermost_inequality}.
	We conclude that almost surely, there exists an infinite path $\gamma$ as described above, starting at a vertex $v^{(k)}$ of $\mathcal{C}_k$ for some (random) $k$.
	
	
	Fix a configuration $\omega$ for which $\gamma$ exists, and write $\gamma_n$ for the segment of $\gamma$ starting at $v^{(k)}$ and ending at the first intersection of $\gamma$ with $\partial B(n)$. ($\gamma_n$ is set to be empty if $v^{(k)} \notin B(n)$.) Then
	\begin{equation}\label{eq: hamburger_helper}
	\begin{split}
	T^\textnormal{B}(0,\partial B(n)) \leq T(0,\partial B(n)) &\leq T(0,v^{(k)}) + T(\gamma_n) \\
	&\leq T(0,v^{(k)}) + \sum_{\ell=1}^\infty (I + a(\ell))\mathbf{1}_{\{\mathcal{C}_\ell \cap B(n) \neq \emptyset\}},
	\end{split}
	\end{equation}
	where $a(\ell) = a_j$ if $2^j \leq \diam(\mathcal{C}_\ell) < 2^{j+1}$. Given $\epsilon>0$, since $a_j \leq \epsilon$ for $j$ large, the above is bounded by
	\[
	T(0,v^{(k)}) + \sum_{\ell=1}^{L} (I+a(\ell)) \mathbf{1}_{\{\mathcal{C}_\ell \cap B(n) \neq \emptyset\}} + (I+\epsilon) \#\{\ell : \mathcal{C}_\ell \cap B(n) \neq \emptyset\},
	\]
	where $L$ is some (random) finite number independent of $n$. We next use that $T^\textnormal{B}(0,\partial B(n))$ equals the maximal number of disjoint closed circuits surrounding 0 in $B(n)$ (see for instance \cite[Proposition~2.4]{Yao}) to bound this above by
	\begin{equation}\label{eq: almost_end!!}
	T(0,v^{(k)}) + \sum_{\ell=1}^L (I+a(\ell)) \mathbf{1}_{\{\mathcal{C}_\ell \cap B(n) \neq \emptyset\}} + (I+\epsilon)\left[ T^\textnormal{B}(0,\partial B(n)) + N_m\right],
	\end{equation}
	where $N_m$ is the maximal number of disjoint closed circuits surrounding 0 that intersect $B(2^{m+1}) \setminus B(2^{m})$, and $m = \lfloor \log_2 n\rfloor$. By the RSW theorem and the BK inequality (see \cite[Ch.~11]{grimmett}), one has
	\[
	\prob(N_m \geq K) \leq e^{-c_{13}K} \text{ for some }c_{13}>0 \text{ and all } K \geq 1,
	\]
	and so by the Borel-Cantelli lemma, $N_m \leq \log^2m$ for all large $m$, almost surely. Returning to \eqref{eq: almost_end!!}, for large $n$, we obtain the bound
	\[
	T(0,v^{(k)}) + \sum_{\ell=1}^L (I+a(\ell)) \mathbf{1}_{\{\mathcal{C}_\ell \cap B(n)\neq \emptyset\}} + (I+\epsilon)\left[ T^\textnormal{B}(0,\partial B(n)) + \log^2(\log_2 n)\right].
	\]
	Combining this with \eqref{eq: YAO} and \eqref{eq: hamburger_helper}, we obtain
	\[
	\frac{I}{2\sqrt{3}\pi} \leq \liminf_{n \to \infty} \frac{T(0,\partial B(n))}{\log n} \leq \limsup_{n \to \infty} \frac{T(0,\partial B(n))}{\log n} \leq \frac{I+\epsilon}{2\sqrt{3}\pi} \text{ almost surely}.
	\]
	As $\epsilon$ was arbitrary, this completes the proof.

	\section{Universality of limiting variance}
	In this section, we prove \eqref{eq: var}. We will use the martingale introduced in \cite{KestenZhang}. Define, for $k\geq 0$,
	\[
	A(k) = B(2^{k+1}) \setminus B(2^k),
	\]
	and
	\[
	m(k) = \inf\{n\geq k: A(n)\text{ contains an open circuit surrounding $0$}\}.
	\]
	We also define
	\[
	\mathcal{O}_k = \text{innermost open circuit surrounding $0$ in $A(m(k))$}
	\]
	for $k\geq 0$ and $\mathcal{O}_{-1} = \{0\}$. Finally, for $k\geq -1$, we define
	\[
	\mathcal{F}_k = \sigma\text{-field generated by $\left\{ \{\mathcal{O}_k = \Gamma\} \cap \{t_{x_1} \in A_1, \dots, t_{x_n} \in A_n\}\right\}_{\Gamma, x_i, A_i}$}
	\]
	for $\Gamma$ a circuit surrounding $0$ outside of $B(2^k)$, $x_i \in \overline{\Gamma}$, and $A_i \subseteq \mathbb{R}$ Borel. (This is the $\sigma$-field ``generated by the weights on and inside $\mathcal{O}_k$,'' and $\overline{\Gamma}$ refers to the union of $\Gamma$ and its interior $\mathring{\Gamma}$.)
	
	Since $\mathcal{O}_{k-1} \subseteq \overline{\mathcal{O}_k}$, we have $\mathcal{F}_{k-1}\subseteq \mathcal{F}_k$, and hence
	\begin{align*}
	T(0, \mathcal{O}_n) - \E T(0,\mathcal{O}_n) &= \sum_{k=0}^n \E [T(0,\mathcal{O}_n)|\mathcal{F}_k] - \E [T(0,\mathcal{O}_n)|\mathcal{F}_{k-1}]\\
	&=: \sum_{k=0}^n \Delta_k.
	\end{align*}
	Then
	\[
	\Var(T(0, \mathcal{O}_n)) = \sum_{k=0}^n \E\Delta_k^2.
	\]
	Write $\Delta_k^\textnormal{B}$ for the corresponding $\Delta_k$ with Bernoulli weights. (Here, as in the last section, we couple $(t_x)$ with $(t_x^\textnormal{B})$, a family of i.i.d. Bernoulli$(1/2)$ random variables, and write $T^\textnormal{B}$ for the passage time using $(t_x^\textnormal{B})$.) We would like to compare $\Var(T(0, \mathcal{O}_n))$ with $\Var(T^\textnormal{B}(0, \mathcal{O}_n))$, and so we would like to bound $\E \Delta_k^2 - \E(\Delta_k^\textnormal{B})^2$.
	
	First we need another formula for $\Delta_k$. Let $(\Omega',\mathcal{F}',\prob')$ be another copy of the probability space $(\Omega, \mathcal{F}, \prob)$. Let $\E'$ denote the expectation with respect to $\prob'$ and $\omega'$ denote a sample point in $\Omega'$. Define
	\[
	\ell(n, \omega, \omega') = m(m(n,\omega)+1, \omega').
	\]
	Now by \cite[Lemma~2]{KestenZhang}, we have
	\begin{align*}
	\Delta_k (\omega) = &T(\mathcal{O}_{k-1}(\omega), \mathcal{O}_k(\omega))(\omega) + \E' T(\mathcal{O}_k(\omega), \mathcal{O}_{\ell(k, \omega, \omega')}(\omega'))(\omega') \\
	&- \E' T(\mathcal{O}_{k-1}(\omega), \mathcal{O}_{\ell(k, \omega, \omega')}(\omega'))(\omega').
	\end{align*}
	By the Cauchy-Schwarz inequality,
	\begin{align*}
	\left| \mathrm{Var}(T(0,\mathcal{O}_n)) - \mathrm{Var}(T^\textnormal{B}(0,\mathcal{O}_n)) \right|  &\leq \sum_{k=0}^n \left| \E \Delta_k^2 - \E(\Delta_k^\textnormal{B})^2\right| \nonumber \\
	&\leq \sum_{k=0}^n (\E(\Delta_k - \Delta_k^\textnormal{B})^2)^{1/2}(\E(\Delta_k + \Delta_k^\textnormal{B})^2)^{1/2}.
	\end{align*}
	Now,
	\begin{align*}
	\E(\Delta_k + \Delta_k^\textnormal{B})^2 \leq 2 \E \Delta_k^2 + 2 \E\left( \Delta_k^\textnormal{B}\right)^2 &\leq 2\left( \sqrt{\E(\Delta_k - \Delta_k^\textnormal{B})^2} + \sqrt{\E(\Delta_k^\textnormal{B})^2}\right)^2 + 2\E(\Delta_k^\textnormal{B})^2 \\
	&\leq 4\E(\Delta_k - \Delta_k^\textnormal{B})^2 + 6 \E(\Delta_k^\textnormal{B})^2,
	\end{align*}
	so using $\sqrt{a+b} \leq \sqrt{a}+\sqrt{b}$ for $a,b \geq 0$, we obtain
	\begin{align*}
	\left| \mathrm{Var}(T(0,\mathcal{O}_n)) - \mathrm{Var}(T^\textnormal{B}(0,\mathcal{O}_n)) \right| &\leq 2\sum_{k=0}^n \E(\Delta_k - \Delta_k^\textnormal{B})^2 \\
	&+\sqrt{6}\left( \sup_k \sqrt{\E(\Delta_k^\textnormal{B})^2}\right) \sum_{k=0}^n (\E(\Delta_k-\Delta_k^\textnormal{B})^2)^{1/2}.
	\end{align*}
	
	Due to \cite[Lemma~5.5]{critical}, $\sup_k \E(\Delta_k^\textnormal{B})^2 < \infty$, and so
	\begin{equation}\label{eq: variance_difference}
	\left| \mathrm{Var}(T(0,\mathcal{O}_n)) - \mathrm{Var}(T^\textnormal{B}(0,\mathcal{O}_n)) \right| \leq C_{8} \sum_{k=0}^n \left[ (\E(\Delta_k - \Delta_k^\textnormal{B})^2)^{1/2} + \E(\Delta_k - \Delta_k^\textnormal{B})^2\right].
	\end{equation}
	
	To bound $\mathbf{E}(\Delta_k - \Delta_k^\textnormal{B})^2$, we write
	\[
	X = X(k) = T(\mathcal{O}_{k-1}(\omega), \mathcal{O}_k(\omega))(\omega) - T^\textnormal{B}(\mathcal{O}_{k-1}(\omega), \mathcal{O}_k(\omega))(\omega),
	\]
	\[
	Y = Y(k) = T(\mathcal{O}_k(\omega), \mathcal{O}_{\ell(k, \omega, \omega')}(\omega'))(\omega') - T^\textnormal{B}(\mathcal{O}_k(\omega), \mathcal{O}_{\ell(k, \omega, \omega')}(\omega'))(\omega'),
	\]
	and
	\[
	Z = Z(k) = T(\mathcal{O}_{k-1}(\omega), \mathcal{O}_{\ell(k, \omega, \omega')}(\omega'))(\omega') - T^\textnormal{B}(\mathcal{O}_{k-1}(\omega), \mathcal{O}_{\ell(k, \omega, \omega')}(\omega'))(\omega').
	\]
	Then by the Cauchy-Schwarz and Jensen inequalities,
	\begin{equation}\label{eq: x_y_z}
	\E(\Delta_k - \Delta_k^\textnormal{B})^2 = \E(X + \E'Y - \E'Z)^2 \leq 9\max\{\E X^2, \E\E' Y^2, \E\E' Z^2\}.
	\end{equation}
	We will only bound $\E\E' Y^2$, as bounding $\E\E' Z^2$ is similar and bounding $\E X^2$ is even easier. 

	We first give an alternate representation for $\E\E' Y^2$ which only depends on $\omega$.
	\begin{lem}\label{lem: y_tilde}
	One has
	\begin{equation}\label{eq: y_tilde}
	\E\E' Y^2 = \E \tilde{Y}^2,
	\end{equation}
	where $\tilde{Y} = T(\mathcal{O}_k(\omega), \mathcal{O}_{\ell(k, \omega, \omega)}(\omega))(\omega) - T^\textnormal{B}(\mathcal{O}_k(\omega), \mathcal{O}_{\ell(k, \omega, \omega)}(\omega))(\omega)$. 
	\end{lem}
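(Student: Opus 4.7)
The plan is to show that both $\E\E' Y^2$ and $\E\tilde Y^2$ equal $\E\, h(\mathcal{O}_k(\omega), m(k,\omega))$ for the same deterministic function $h$. The whole argument hinges on the structural claim that, viewed as a function of $(\omega, \omega')$, $Y^2$ depends only on the triple $(\mathcal{O}_k(\omega),\, m(k,\omega),\, \omega'|_{\mathbb{Z}^2 \setminus \overline{\mathcal{O}_k(\omega)}})$. Granted this, one writes $Y^2 = F(\mathcal{O}_k(\omega),\, m(k,\omega),\, \omega'|_{\mathbb{Z}^2\setminus\overline{\mathcal{O}_k(\omega)}})$, sets
\[
h(\Gamma, j) := \E'\bigl[F(\Gamma, j, \omega'|_{\mathbb{Z}^2 \setminus \overline{\Gamma}})\bigr],
\]
and obtains $\E'[Y^2] = h(\mathcal{O}_k(\omega), m(k,\omega))$, hence $\E\E' Y^2 = \E\, h(\mathcal{O}_k(\omega), m(k,\omega))$ by Fubini.

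To justify the structural claim, fix $\omega$ and write $\Gamma = \mathcal{O}_k(\omega)$, $j = m(k,\omega)$, so that $\overline{\Gamma} \subseteq B(2^{j+1})$ (since $\Gamma \subseteq A(j)$ and $0 \in \mathring{\Gamma}$). The circuit $\mathcal{O}_{\ell(k,\omega,\omega')}(\omega') = \mathcal{O}_{m(j+1,\omega')}(\omega')$ lies in an annulus $A(n)$ with $n \geq j+1$, so it is contained in $B(2^{j+1})^c \subseteq \mathbb{Z}^2 \setminus \overline{\Gamma}$, and is measurable with respect to $\omega'|_{\mathbb{Z}^2 \setminus \overline{\Gamma}}$. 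For the passage time $T(\Gamma, \mathcal{O}_\ell(\omega'))(\omega')$, one uses a last-exit truncation: given any path $(x_1,\dots,x_n)$ from a vertex of $\Gamma$ to a vertex of $\mathcal{O}_\ell(\omega')$, let $\tau$ be the largest index with $x_\tau \in \overline{\Gamma}$. Because $\mathring{\Gamma}$ has no edges to $\mathbb{Z}^2 \setminus \overline{\Gamma}$, the condition $x_{\tau+1} \notin \overline{\Gamma}$ forces $x_\tau \in \Gamma$, and the truncated self-avoiding path $(x_\tau,\dots,x_n)$ remains a path from $\Gamma$ to $\mathcal{O}_\ell(\omega')$, with every vertex other than its weight-excluded starting point $x_\tau$ lying in $\mathbb{Z}^2 \setminus \overline{\Gamma}$. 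Its passage time $\sum_{i=\tau+1}^n t_{x_i}(\omega')$ is bounded by $T(x_1,\dots,x_n)$ by nonnegativity of weights, so the full infimum coincides with the infimum over such ``clean'' paths, which is a function of $\omega'|_{\mathbb{Z}^2 \setminus \overline{\Gamma}}$ alone. The same reasoning works verbatim for the Bernoulli passage time $T^\textnormal{B}$.

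To conclude, setting $\omega' = \omega$ in the representation above gives $\tilde Y^2 = F(\mathcal{O}_k(\omega), m(k,\omega), \omega|_{\mathbb{Z}^2 \setminus \overline{\mathcal{O}_k(\omega)}})$ with the same $F$. The event $\{\mathcal{O}_k = \Gamma\}$ is $\sigma(\omega|_{\overline{\Gamma}})$-measurable: openness and innermost-ness of $\Gamma$ in its enclosing annulus depend only on $\omega|_{\overline{\Gamma}}$, and the absence of open circuits surrounding $0$ in the earlier annuli $A(k),\dots,A(m(k)-1) \subseteq B(2^{m(k)}) \subseteq \mathring{\Gamma}$ does too. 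Thus $\mathcal{F}_k$ coincides with $\sigma(\omega|_{\overline{\mathcal{O}_k}})$, and $\omega|_{\mathbb{Z}^2 \setminus \overline{\mathcal{O}_k}}$ is independent of $\mathcal{F}_k$ with the same i.i.d.\ marginal as $\omega'|_{\mathbb{Z}^2 \setminus \overline{\mathcal{O}_k}}$ has under $\prob'$. Therefore $\E[\tilde Y^2 \mid \mathcal{F}_k] = h(\mathcal{O}_k(\omega), m(k,\omega))$, and taking expectations yields $\E\tilde Y^2 = \E\, h(\mathcal{O}_k(\omega), m(k,\omega)) = \E\E' Y^2$, which is \eqref{eq: y_tilde}. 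The main source of difficulty is the structural claim paired with the stopping-set property for $\mathcal{O}_k$; once these measurability points are in place, the identity is a routine Fubini/tower argument.
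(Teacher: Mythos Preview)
Your proof is correct and follows essentially the same approach as the paper: both arguments decompose according to the value of $(\mathcal{O}_k,\,m(k))=(\Gamma,j)$, use that the relevant passage times depend only on the weights outside $\overline{\Gamma}$ while $\{\mathcal{O}_k=\Gamma,\,m(k)=j\}$ depends only on the weights inside $\overline{\Gamma}$, and then invoke independence to swap the outside $\omega'$-configuration for the outside $\omega$-configuration. The paper phrases this as an equality in distribution of the pair $(T_Y,T_Y^{\textnormal B})$ with $(T_{\tilde Y},T_{\tilde Y}^{\textnormal B})$, whereas you phrase it via a common representation $\E\,h(\mathcal{O}_k,m(k))$; your version is a bit more explicit about the last-exit truncation and the stopping-set measurability of $\{\mathcal{O}_k=\Gamma\}$, but the substance is identical.
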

	\begin{proof}To show this, it suffices to show that
	\[
	(T_Y, T_Y^\textnormal{B}) = (T_{\tilde{Y}}, T_{\tilde{Y}}^\textnormal{B}) \text{ in distribution},
	\]
	where
	\[
	T_Y = T(\mathcal{O}_k(\omega), \mathcal{O}_{\ell(k, \omega, \omega')}(\omega'))(\omega'),
	\]
	\[
	T_{\tilde{Y}} = T(\mathcal{O}_k(\omega), \mathcal{O}_{\ell(k, \omega, \omega)}(\omega))(\omega)
	\]
	and $T_Y^\textnormal{B}, T_{\tilde Y}^\textnormal{B}$ are defined analogously using the Bernoulli variables $(t_x^\textnormal{B})$. To prove this, note that for any Borel set $E \subseteq \R^2$,
	\begin{align*}
	&\left(\prob\times\prob'\right)((T_Y, T_Y^\textnormal{B}) \in E) \\
	=~&\sum_{\ell \geq k} \sum_{\Gamma\subseteq A(\ell)} \left(\prob\times\prob'\right)((T_Y, T_Y^\textnormal{B}) \in E, m(k,\omega) = \ell,\mathcal{O}_k = \Gamma).
	\end{align*}
	The summand equals
	\begin{align*}
	&\left( \prob\times\prob'\right)((T(\Gamma,\mathcal{O}_{m(\ell+1,\omega')}(\omega'))(\omega'), T^\textnormal{B}(\Gamma,\mathcal{O}_{m(\ell+1,\omega')}(\omega'))(\omega')) \in E, m(k,\omega) = \ell, \mathcal{O}_k = \Gamma) \\
	=~&\prob'((T(\Gamma,\mathcal{O}_{m(\ell+1,\omega')}(\omega'))(\omega'), T^\textnormal{B}(\Gamma,\mathcal{O}_{m(\ell+1,\omega')}(\omega'))(\omega')) \in E) \prob(m(k,\omega) = \ell, \mathcal{O}_k = \Gamma)\\
	=~&\prob((T(\Gamma,\mathcal{O}_{m(\ell+1,\omega)}(\omega))(\omega), T^\textnormal{B}(\Gamma,\mathcal{O}_{m(\ell+1,\omega)}(\omega))(\omega)) \in E) \prob(m(k,\omega) = \ell, \mathcal{O}_k = \Gamma).
	\end{align*}
	Note that the event $\{m(k,\omega) = \ell, \mathcal{O}_k = \Gamma\}$ depends only on variables associated to vertices in $\overline{\Gamma}$. So we can regroup the probabilities by independence, and reverse the steps to obtain $\prob((T_{\tilde{Y}}, T_{\tilde{Y}}^\textnormal{B}) \in E)$. This shows \eqref{eq: y_tilde}.
	\end{proof}
	
	We will also need some preliminary bounds on moments of $\tilde{Y}$ and $\Delta_k$.
	\begin{lem}\label{lem: preliminary_bounds}
	One has $\E \tilde Y^2 < \infty$ for all $k$. Also, there exists $C_{9} <\infty$ and $k_0$ such that if $k \geq k_0$, then $\E \tilde Y^4 \leq C_{9}$. The same bounds hold for $\Delta_k$: one has $\E \Delta_k^2 < \infty$ for all $k$ and $\E \Delta_k^4 \leq C_{9}$ for all $k \geq k_0$.
	\end{lem}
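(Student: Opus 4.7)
The plan is to bound moments of $\tilde{Y}$ and then transfer them to $\Delta_k$ using $\Delta_k = X + \E'Y - \E'Z$, Jensen's inequality, and the distributional identity of Lemma~\ref{lem: y_tilde} (and its analogs for $X$ and $Z$), which give $\E\E'|Y|^p = \E|\tilde{Y}|^p$, etc. The basic observation is that under the coupling $t_v^\textnormal{B} = I\cdot\mathbf{1}_{\{t_v > 0\}}$ one has $t_v^\textnormal{B} \leq t_v$ pointwise, so $T^\textnormal{B}(\gamma) \leq T(\gamma)$ on every path, whence
\[
0 \leq \tilde{Y} \leq T(\mathcal{O}_k, \mathcal{O}_{\ell(k,\omega,\omega)}).
\]
The two circuits $\mathcal{O}_k$ and $\mathcal{O}_{\ell(k,\omega,\omega)}$ are innermost open circuits in successive dyadic annuli, so by RSW and the BK inequality the number $N^*$ of maximal disjoint closed circuits separating them has exponential tails uniformly in $k$.

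For $\E\tilde{Y}^2 < \infty$, one bounds $\E T(\mathcal{O}_k,\mathcal{O}_\ell)^2$ via a path built using the Kesten-Zhang-Sidoravicius device (as in the proof of \cite[Lemma~5.5]{critical}): on the good circuits of Lemma~\ref{lem:good}, each separating closed circuit may be crossed at a vertex whose weight is dominated by the minimum of at least $6$ i.i.d.\ copies of $t_v$. The hypothesis $\E\min\{t_1,\ldots,t_6\}^2 < \infty$ combined with Cauchy--Schwarz over a sum of $N^*$ such weights (using the exponential tails of $N^*$) yields $\sup_{k\geq k_0} \E T(\mathcal{O}_k,\mathcal{O}_\ell)^2 < \infty$, hence $\E\tilde{Y}^2 < \infty$; the corresponding bound for small $k$ is straightforward since only finitely many scales are involved.

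The $4$th-moment bound $\E\tilde{Y}^4 \leq C_9$ is the main obstacle, because the hypothesis does not furnish $\E T(\mathcal{O}_k,\mathcal{O}_\ell)^4 < \infty$ (by the $q=4$ analog of the Cox--Durrett-type lemma cited in the remark after Theorem~\ref{thm:lln}). We exploit the difference structure of $\tilde{Y}$. On a high-probability event $G_k$ on which the geometric hypotheses of Lemma~\ref{lem: induction} (and the initial-path variant of Remark~\ref{rem: construction}) hold in the relevant region, the low-weight construction of Section~\ref{sec: construction} produces a path from $\mathcal{O}_k$ to $\mathcal{O}_\ell$ whose closed vertices are all of low weight (Definition~\ref{def: low_weight}), so $t_v - t_v^\textnormal{B} \leq a_j$ on each crossing. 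This yields
\[
\tilde{Y} \leq T(\text{low-weight path}) - T^\textnormal{B}(\mathcal{O}_k,\mathcal{O}_\ell) \leq a_j\cdot N^*,
\]
and hence $\E\tilde{Y}^4\mathbf{1}_{G_k}$ is uniformly bounded. On $G_k^c$, whose probability tends to zero as $k\to\infty$ by Lemmas~\ref{lem: outermost}, \ref{lem:squares}, \ref{lem:good}, and \ref{lem: induction}, one reverts to the generic KSZ construction where each crossing passes through a minimum over $\sim(\diam\mathcal{C}_i)^{c_2}$ candidate vertices: once the candidate pool is this large, all polynomial moments of the per-crossing weight become finite, and a H\"older-type argument balancing these moments against the decay of $\prob(G_k^c)$ gives a uniform bound on $\E\tilde{Y}^4\mathbf{1}_{G_k^c}$.

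Finally, the moment bounds on $\Delta_k$ follow from $|\Delta_k|^p \leq 3^{p-1}(|X|^p + (\E'|Y|)^p + (\E'|Z|)^p)$, Jensen's inequality applied to $\E'$, and the distributional identities described above.
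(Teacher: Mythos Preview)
Your proposal diverges from the paper's proof, and the 4th-moment part has a genuine gap.

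\textbf{What the paper does.} The paper does \emph{not} use the low-weight construction of Section~\ref{sec: construction} or the difference structure of $\tilde Y$ for this lemma. It first proves, for every $p>0$, the annulus estimate
\[
\E T(B(m),\partial B(n))^p \leq C\Big(\log\tfrac{n}{m}\Big)^{C'} \quad (n\geq m\geq m_0),
\]
via a sectors argument: split the annulus into $R$ disjoint angular sectors, in each take a path with the minimal number $N_i$ of nonzero-weight vertices, and bound $\E\min_i T(\pi_i)^p$ using independence of the $T(\pi_i)$ and the fact that $\E t_v^\epsilon<\infty$ for small $\epsilon$ (which follows from $\E\min\{t_1,\dots,t_6\}^2<\infty$). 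With $R>p/\epsilon$, all polynomial moments of the minimum are controlled; RSW/BK handles the tail of $N_i$. The bound on $\E\tilde Y^4$ then comes from $\tilde Y\leq T(\mathcal{O}_k,\mathcal{O}_{\ell})\leq T(B(2^r),\partial B(2^{s+1}))$, Cauchy--Schwarz in the decomposition over $\{m(k)=r,\ell=s\}$, and the RSW tail of those events. The case $\E\tilde Y^2<\infty$ for small $k$ is where the hypothesis $\E\min\{t_1,\dots,t_6\}^2<\infty$ enters, through six deterministic paths near the origin (as in Cox--Durrett), not through the KSZ candidate-vertex mechanism you invoke.

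\textbf{Where your argument breaks.} Your handling of $\E\tilde Y^4\mathbf{1}_{G_k^c}$ is circular. You define $G_k$ as the event on which the geometric hypotheses of Lemma~\ref{lem: induction} hold, and on $G_k^c$ you propose to ``revert to the generic KSZ construction'' with $(\diam\mathcal{C}_i)^{c_2}$ candidate crossing vertices. But that construction requires precisely the same geometric inputs---$(c_1,c_2,c_3)$-goodness of the circuits, the bounds $N(j,c)\leq j^2$, $N^{(3)}=0$, and the diameter growth conditions---that define $G_k$. On $G_k^c$ at least one of these fails, and then neither the low-weight path nor the ``many candidates'' fallback is available, so you have no control of $\tilde Y$ there. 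A H\"older balancing of moments against $\prob(G_k^c)$ would need a uniform bound on some higher moment $\E\tilde Y^q$ on the \emph{whole} space, which is exactly what is at issue. The paper's sectors estimate sidesteps this entirely by giving uniform polynomial moments of the annulus passage time without any good-event decomposition.

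A secondary point: your remark that the moment hypothesis ``does not furnish $\E T(\mathcal{O}_k,\mathcal{O}_\ell)^4<\infty$'' is incorrect for large $k$. The sectors argument shows this moment is uniformly bounded once $\mathcal{O}_k\subset B(2^k)^c$ with $2^k\geq m_0$; the Cox--Durrett obstruction you cite concerns $T(0,\cdot)$, where the origin has only six neighbours, not annulus-to-annulus times.
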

	\begin{proof}
	The proofs for $\Delta_k$ and $\tilde Y$ are very similar, so we show the case of $\tilde Y$. The first step is to (non-optimally) bound the $p$-th moment of annulus passage times. Let $m,n$ be nonnegative integers with $m \leq n$. We will show that for any $p>0$, there exist $C_{10}, C_{11} > 0$ and $m_0$ such that 
	\begin{equation}\label{eq: pasta_supremo}
	\E T(B(m), \partial B(n))^p \leq C_{10} \left( \log \frac{n}{m} \right)^{C_{11}} \text{ if } n \geq m \geq m_0.
	\end{equation}

	To do this, we take $R$ to be a large fixed integer, and construct $R$ disjoint sectors as follows. Define the first sector $S_1$ to be the open region of $\mathbb{R}^2$ whose boundary consists of the circle of radius $m$ centered at 0, the circle of radius $2n$ centered at zero, the positive $e_1$-axis, and the ray started at 0 (in the first quadrant) with angle $\pi/R$ with the positive $e_1$-axis. $S_i$ for $i=2, \dots, R$ is the rotation of $S_1$ by the angle $\pi (i-1) / R$. (We choose the constant $m_0$ to prevent the sectors from being exiguous.) For $i=1, \dots, R$, let $\pi_i$ be a path connecting the inner boundary of $S_i$ to the outer boundary with the minimal number $N_i$ of nonzero-weight vertices. Then since the variables $T(\pi_i)$ are independent, for $\epsilon \in (0,1/6)$ (so that a vertex-weight $t_v$ satisfies $\E t_v^\epsilon<\infty$),
	\begin{align}
	\E T(B(m), \partial B(n))^p &\leq \E \min\{T(\pi_1), \dots, T(\pi_R)\}^p \nonumber \\
	&\leq 1+ \int_1^\infty py^{p-1} \max_i \prob(T(\pi_i) \geq y)^R~\text{d}y \nonumber \\
	&\leq 1+ \int_1^\infty py^{p-1}  \left( \frac{\max_i \E T^\epsilon (\pi_i)}{y^\epsilon}\right)^R~\text{d}y\nonumber \\
	&= 1+  \left( \max_i \E T^{\epsilon}(\pi_i)\right)^R \int_1^\infty py^{p-1-R\epsilon}~\text{d}y \nonumber\\
	&\leq C_{12} \left( 1+ \left( \max_i \E T^{\epsilon}(\pi_i)\right)^R\right) \label{eq: fritatta},
	\end{align}
	so long as $R > p/\epsilon$. The inner expected value is computed by introducing $K = \log^3 (n/m)$ and writing
	\begin{align}
	&\E T^\epsilon (\pi_i) \mathbf{1}_{\{N_i \leq K\}} + \sum_{j=1}^\infty \E T^\epsilon(\pi_i)\mathbf{1}_{\{N_i \in (jK,(j+1)K]\}} \nonumber \\
	\leq~&\E [t_1 + \dots + t_K]^\epsilon + \sum_{j=1}^\infty \left( \E[t_1 + \dots + t_{(j+1)K}]^\epsilon \prob(N_i \geq jK)\right) \nonumber \\
	\leq~&K\E t_1^\epsilon\left( 1 + \sum_{j=1}^\infty (j+1) \prob(N_i \geq jK)\right). \label{eq: fritatta_2}
	\end{align}
	Here, we have written $t_1, t_2, \dots$ for i.i.d.~vertex-weights and used that the variable $N_i$ is independent of the non-zero weights of vertices on $\pi_i$.
	
		\begin{figure}[h]
		\centering
		\includegraphics[width=.6\textwidth]{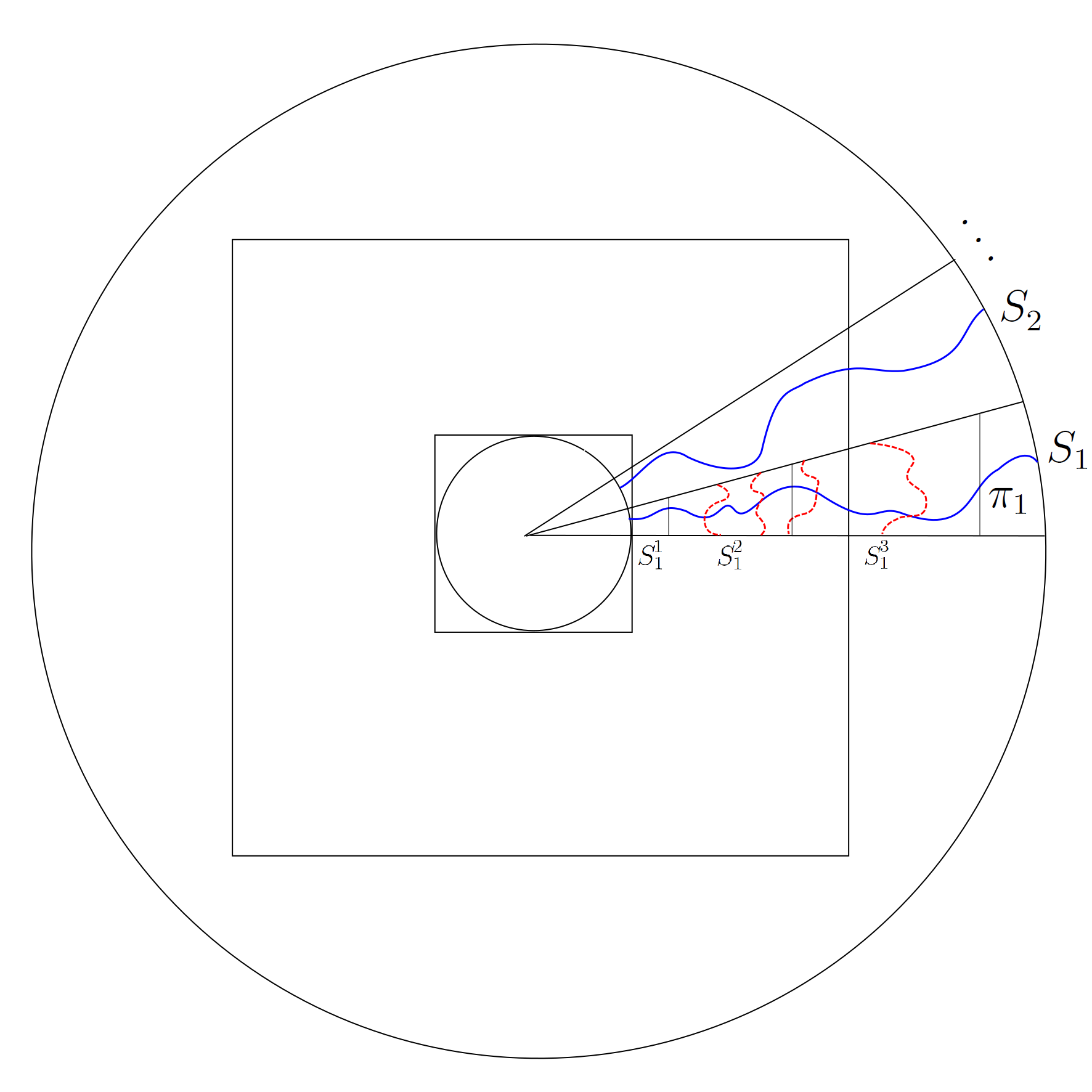}
		\caption{The blue solid lines are the $\pi_i$'s, paths connecting the inner boundary of $S_i$ to the outer boundary with the minimal number $N_i$ of nonzero-weight vertices. On $S_1$ (say), for each nonzero-weight vertex that $\pi_1$ passes through, by minimality and planar duality, there must be a closed path containing that vertex and connecting the bottom of $S_1$ to the top of $S_1$. These closed paths are the red dotted curves in the figure.}
		\label{fig: sectors}
	\end{figure}
	
	Now to bound $\prob(N_i \geq jK)$, we note that if $N_i \geq jK$, then by planar duality, there must be at least $jK$ many disjoint paths connecting the side boundaries of $S_i$ to each other, and consisting only of nonzero-weight vertices. Splitting $S_i$ into at most $\left\lceil \log_2 (n/m)\right\rceil$ many sets of the form $S_i \cap \left[ B(2^k) \setminus B(2^{k-1})\right]$ and writing these sets as $\{S_i^r\}_r$, we see that at least $jK/\left\lceil \log_2 (n/m) \right\rceil$ many of these paths must intersect some $S_i^r$. By the RSW theorem, the probability that there exists at least one such path is bounded above by $1-c_{14}$ for some $c_{14}$ positive, uniformly in $r,m,n$. By the BK inequality, we therefore obtain
	\begin{align*}
	\prob(N_i \geq jK) &\leq \sum_r \prob(\text{at least }jK/\left\lceil \log_2(n/m)\right\rceil \text{ such paths intersect } S_i^r) \\
	&\leq \left\lceil \log_2(n/m)\right\rceil (1-c_{14})^{jK/\left\lceil \log_2(n/m)\right\rceil} \\
	&\leq C_{13}e^{-c_{15}j}.
	\end{align*}
	We plug this estimate back into \eqref{eq: fritatta_2} to obtain
	\[
	\E T^\epsilon(\pi_i) \leq C_{14} \log^3(n/m),
	\]
	and then back in \eqref{eq: fritatta} to obtain
	\[
	\E T(B(m),\partial B(n))^p \leq C_{15} \log^{3R}(n/m).
	\]
	This shows \eqref{eq: pasta_supremo}.
	
	The next step is to extend \eqref{eq: pasta_supremo} in the case when $p=2$ to all $m,n$ with $n \geq m$. We claim that there are numbers $C_{16}, C_{17}, C_{18}$ such that
	\begin{equation}\label{eq: pasta_supremo_2}
	\E T(B(m),\partial B(n))^2 \leq C_{16} + C_{17}\left( \log \frac{n}{m}\right)^{C_{18}} \text{ if } n \geq m \geq 0.
	\end{equation}
	To do this, we note that if $n\geq m \geq m_0$ (where $m_0$ is from \eqref{eq: pasta_supremo}), then the inequality follows from \eqref{eq: pasta_supremo}. Otherwise $m < m_0$ and either $n \leq m_0+100$, say, or $n > m_0+100$. In the first case, we upper bound
	\begin{equation}\label{eq: quest_1}
	\E T(B(m), \partial B(n))^2 \leq \E T(0,\partial B(m_0+100))^2.
	\end{equation}
	In the second case, we let $\pi$ be a geodesic between $B(m_0)$ and $\partial B(n)$ (chosen in some deterministic way) and $\widetilde \pi$ be the portion of $\pi$ from its initial point to its first intersection with $\partial B(m_0+100)$. Then
	\begin{align}
	\E T(B(m), \partial B(n))^2 &\leq 2\E T(0,\widetilde \pi)^2 + 2\E T(B(m_0), \partial B(n))^2 \nonumber \\
	&\leq 2\sum_P \E T(0,P)^2 + 2\E T(B(m_0), \partial B(n))^2, \label{eq: quest_2}
	\end{align}
	where the sum is over all paths $P$ that start at $\partial B(m_0)$ and end at $\partial B(m_0+100)$.	
	
	We bound the terms in \eqref{eq: quest_1} and \eqref{eq: quest_2} similarly. For example, for \eqref{eq: quest_2}, a moment's reflection shows that one can construct six deterministic paths $\rho_1, \dots, \rho_6$ that start at 0 and end at $P$, and are vertex disjoint except for their initial points. The upper bound we obtain is then (using \eqref{eq: pasta_supremo})
	\[
	\E T(B(m), \partial B(n))^2 \leq 2 \sum_P \E \min\{T(\rho_1), \dots, T(\rho_6)\}^2 + C_{17} \left( \log \frac{n}{m} \right)^{C_{18}}.
	\]
	The argument of \cite[Lemma~3.1]{coxdurrett} shows that since we have assumed $\E \min\{t_1, \dots, t_6\}^2 < \infty$, then the first term on the right is bounded by a constant. The bound on \eqref{eq: quest_1} using this method is just a constant, so this implies \eqref{eq: pasta_supremo_2}.
	
%
	
	We now move to showing that for some $k_0$,
	\begin{equation}\label{eq: task_1}
	\E\tilde Y^4 \leq C_{9} \text{ for all } k\geq k_0.
	\end{equation}
	Given \eqref{eq: pasta_supremo}, the claimed inequality will follow forthwith. Indeed, since $T^\textnormal{B} \leq T$, we write the left side as
	\begin{align}
	&\E \left( T(\mathcal{O}_k(\omega), \mathcal{O}_{\ell(k, \omega, \omega)}(\omega))(\omega) - T^\textnormal{B}(\mathcal{O}_k(\omega), \mathcal{O}_{\ell(k, \omega, \omega)}(\omega))(\omega)\right)^4 \nonumber \\
	\leq~&  \E \left( T(\mathcal{O}_k(\omega), \mathcal{O}_{\ell(k, \omega, \omega)}(\omega))(\omega) \right)^4 \nonumber \\
	=~&\sum_{r=k}^\infty \sum_{s=r+1}^\infty \E (T(\mathcal{O}_k(\omega), \mathcal{O}_{\ell(k,\omega,\omega)}(\omega))(\omega))^4 \mathbf{1}_{\{m(k) = r, \ell(k,\omega,\omega) = s\}} \nonumber \\
	\leq~& \sum_{r=k}^\infty \sum_{s=r+1}^\infty \E T(B(2^r), \partial B(2^{s+1}))^4 \mathbf{1}_{\{m(k) = r, \ell(k,\omega,\omega) = s\}}. \label{eq: mr_hernschtadt}
	\end{align}
	By the Cauchy-Schwarz inequality, we obtain the upper bound
	\[
	 \sum_{r=k}^\infty \sum_{s=r+1}^\infty \sqrt{\E T(B(2^r), \partial B(2^{s+1}))^8 \prob(m(k)=r,\ell(k,\omega,\omega)=s)}.
	\]
	Assuming that $k \geq k_0$, where $k_0$ is large enough so that $B(2^{k_0}) \supseteq B(m_0)$ (here $m_0$ is from \eqref{eq: pasta_supremo} with $p=8$), we can use \eqref{eq: pasta_supremo} to further bound this by
	\begin{align}
	&C_{18} \sum_{r=k}^\infty \sum_{s=r+1}^\infty (s+1-r)^{C_{19}} \sqrt{ \prob(m(k)=r,\ell(k,\omega,\omega)=s)} \label{eq: mr_kit!} \\
	\leq~& C_{19} \sum_{r=k}^\infty e^{-c_{16} (r-k)} \sum_{s=r+1}^\infty (s+1-r)^{C_{20}} e^{-c_{17}(s-r)} \nonumber \\
	\leq~& C_{21} \sum_{r=k}^\infty e^{-c_{18} (r-k)} \nonumber \\
	\leq~& C_{22}. \nonumber
	\end{align}
	In the second line, we have used the RSW theorem. This proves \eqref{eq: task_1}.
	
	Last, to show $\E \tilde Y^2 < \infty$ for all $k$, due to \eqref{eq: task_1}, we need only consider the case when $k < k_0$. Then we move to \eqref{eq: mr_hernschtadt} with an exponent 2 instead of 4:
	\[
	\E \tilde Y^2 \leq \sum_{r=k}^\infty \sum_{s=r+1}^\infty \E T(B(2^r),\partial B(2^{s+1}))^2 \mathbf{1}_{\{m(k)=r,\ell(k,\omega,\omega)=s\}}.
	\]
	The proof from here follows similar lines to that of the above, so we only briefly indicate the idea. For values of $r,s$ such that $r \leq k_0$ and $s \leq k_0+100$, we upper bound by removing the indicator and summing over these (finitely many) values with the bound \eqref{eq: pasta_supremo_2} to obtain a finite number. For values of $r,s$ which are $> k_0$, we apply the Cauchy Schwarz inequality to obtain a sum over such $r,s$ of
	\[
	\sqrt{\E T(B(2^r), \partial B(2^{s+1}))^4 \prob(m(k)=r,\ell(k,\omega,\omega)=s)},
	\]
	and sum this as in the case of bounding $\E \tilde Y^4$. Last, if $r \leq k_0$ but $s > k_0+100$, we bound $T(B(2^r), \partial B(2^{s+1}))$ above by the sum of $T(0,\tilde{\pi})$ and $T(B(2^{k_0}), \partial B(2^{s+1}))$ (where $\tilde{\pi}$ is a geodesic connecting $B(2^{k_0})$ to $\partial B(2^{k_0+100})$ chosen analogously to that above \eqref{eq: quest_2}), and note that the first term has finite second moment. The second term is bounded as in the case where $r,s > k_0$. Combining the cases will produce the final inequality, $\E \tilde Y^2 < \infty$.
	\end{proof}
	
	To bound $\E \tilde{Y}^2$ from \eqref{eq: y_tilde} more tightly, we introduce two events. Choose $c_1,c_3$ and $c_1',c_3'$ with corresponding $c_2=c_2'$ as dictated by Lemma~\ref{lem:good}. Also fix $c \in (c_1,c_1'c_3')$, and $\hat c \in (0,c_1c_3)$. Write $\tilde{\mathcal{C}}_k$ for the first circuit in the sequence in Lemma~\ref{lem: outermost} with $\mathcal{O}_k$ in the interior of $\tilde{\mathcal{C}}_k$, and let
	\[
	F_k = \{\exists\; v\in \mathcal{O}_k \text{ such that } d(v, \tilde{\mathcal{C}}_k) < 2(\diam(\tilde{\mathcal{C}}_k))^{c_1'}\}.
	\]
	Also, for $C_6, C_7, c_{7}>0$, let $G_k$ be the event that at least one of the following fails:
	\begin{itemize}
	\item each circuit from the sequence in Lemma~\ref{lem: outermost} with diameter at least $2^k$ is $(c_1,c_2,c_3)$-good and $(c_1',c_2',c_3')$-good, 
	\item $N(j,c)\leq  j^2$, $N(j,\hat c)\leq j^2$, and $N^{(3)}(j,c_3')= 0$ for all $j \geq k$, 
	\item $\diam(\mathcal{C}_{i+2}) \leq (i+1)^{C_6} \diam(\mathcal{C}_{i+1})$ and $c_7i \leq \log(\diam(\mathcal{C}_{i+2})) \leq C_7i$ for all $i$ such that $\mathcal{C}_i$ has diameter at least $2^k$. 
	\end{itemize}
	Then,
	\begin{equation}\label{eq: y_tilde_breakdown}
	\E\E' Y^2 = \E \tilde{Y}^2 = \E \tilde{Y}^2\mathbf{1}_{F_k^c \cap G_k^c} + \E\tilde{Y}^2 \mathbf{1}_{F_k\cup G_k}.
	\end{equation}
	\begin{lem}\label{lem: G_k_lemma}
	For $C_6$ and $C_{7}$ chosen to be large enough and $c_{7}$ chosen to be small enough, there exist $C_{23}, C_{24}, c_{19}>0$ such that
	\[
	\mathbf{P}(G_k) \leq C_{23}/k
	\]
	and
		\begin{equation}
	\label{eq: base_case}
	\prob(F_k) \leq C_{24}e^{-c_{19}k}.
	\end{equation}
	\end{lem}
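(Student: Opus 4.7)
The proof separates into an elementary bound on $\mathbf{P}(G_k)$ and a more involved bound on $\mathbf{P}(F_k)$ via RSW and multi-arm exponents.

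\paragraph{Bound on $\mathbf{P}(G_k)$.} Decompose $G_k$ into its three defining failure events. For the goodness failure, Lemma~\ref{lem:good} applied in both parameter regimes and summed over dyadic diameter scales $j \geq k$ gives probability $\leq 2\sum_{j \geq k} C_5 e^{-c_5 j} \leq C e^{-c_5 k}$. For the counting failure, summing \eqref{eq: Njc_prob} (with both $c$ and $\hat c$) and \eqref{eq: N3_prob} over $j \geq k$ yields $\sum_{j \geq k} O(j^{-2}) \leq C/k$; this is the dominant term. For the growth failure, a minor adaptation of \cite[Eq.~(5.25)]{KSZ} combined with \eqref{eq: outermost_inequality}, provided $C_6, C_7$ are large and $c_7$ is small, gives probability $\leq Ce^{-ck}$. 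A union bound produces $\mathbf{P}(G_k) \leq C_{23}/k$.

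\paragraph{Bound on $\mathbf{P}(F_k)$.} Write $R = 2^{m(k)} \geq 2^k$, so $\diam(\mathcal{O}_k) \in [R, 2R]$. The argument proceeds in two steps. First, control the diameter of $\tilde{\mathcal{C}}_k$: as in the proof of Lemma~\ref{lem: outermost}, every closed circuit around $0$ must be some $\mathcal{C}_i$. Let $M$ be the smallest index $\geq m(k)$ such that $A(M)$ contains a closed circuit around $0$; by iterated RSW using approximate independence across disjoint annuli, $\mathbf{P}(M \geq m(k) + \ell) \leq C(1-c)^\ell$, and on $\{M \leq m(k)+\ell\}$ one has $\diam(\tilde{\mathcal{C}}_k) \leq 4\cdot 2^{m(k)+\ell}$, giving $r := 2(\diam(\tilde{\mathcal{C}}_k))^{c_1'} \leq C\cdot 2^{c_1'(m(k)+\ell)}$. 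Second, on $F_k$ the circuits $\mathcal{O}_k$ and $\tilde{\mathcal{C}}_k$, both of scale $\gtrsim R$, must approach within distance $r$. Cover the relevant annular region by $O((R/r)^2)$ boxes of side $r$; at each candidate close-encounter box, two open and two closed azimuthal arms extend along $\mathcal{O}_k$ and $\tilde{\mathcal{C}}_k$ out to scale $R$, and the innermost property of $\mathcal{O}_k$ in $A(m(k))$ together with the outermost property of $\tilde{\mathcal{C}}_k$ in $\mathring{\mathcal{C}}_{j^\ast+1}$ (where $\tilde{\mathcal{C}}_k = \mathcal{C}_{j^\ast}$) produce, by planar duality, two further radial arms of the opposite colors. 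The resulting polychromatic $6$-arm event has exponent exceeding $2$ by the bounds of \cite{KZexponents} (and is sharp for site percolation on $\mathbb{T}$), so the per-box probability is $(r/R)^\alpha$ with $\alpha > 2$ and the union bound decays as a positive power of $R/r$. Combining the exponential-in-$\ell$ RSW tail with the polynomial-in-$R$ arm decay and summing over $\ell \geq 0$ and $m(k) \geq k$ gives $\mathbf{P}(F_k) \leq C_{24} e^{-c_{19} k}$.

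\paragraph{Main obstacle.} The $G_k$ estimate is essentially bookkeeping from Lemmas~\ref{lem:good} and \ref{lem:squares}. The real difficulty is the $F_k$ bound: a naive $4$-arm estimate using only the azimuthal arms gives polychromatic exponent $5/4$, which does not beat the box-counting factor $(R/r)^2$ and yields a divergent union bound. The two extra radial arms, extracted via duality from the combinatorial innermost/outermost structure of $\mathcal{O}_k$ and $\tilde{\mathcal{C}}_k$, are essential; setting up this $6$-arm event correctly at the close-encounter box, and balancing its algebraic decay against the RSW control on the scale of $\tilde{\mathcal{C}}_k$, is the technical heart of the argument.
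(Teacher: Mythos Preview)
Your overall strategy matches the paper's: the $G_k$ bound is bookkeeping from Lemmas~\ref{lem:good} and \ref{lem:squares}, and the $F_k$ bound goes via RSW control on the scale of $\tilde{\mathcal{C}}_k$ plus a $6$-arm event at the close-encounter location, with the fifth and sixth arms coming from duality and the innermost/outermost properties of $\mathcal{O}_k$ and $\tilde{\mathcal{C}}_k$. One small overclaim: the growth-failure piece of $G_k$ only gives $C/k$ (the paper deduces this from \cite[Lemma~4]{KSZ}, not an exponential bound), but this is harmless since the counting failure already dominates.

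There is, however, a genuine gap in your $F_k$ sketch. You assert a $6$-arm event from scale $r$ out to scale $R$, but the two radial arms need not reach scale $R$. The closed radial arm furnished by the innermost property of $\mathcal{O}_k$ terminates at $\partial B(2^{m(k)})$, so if the close-encounter vertex $v$ sits near that inner boundary the arm is short and the full-plane $6$-arm event fails. The paper fixes this by running the arm event only out to an intermediate scale $2^{\nu'\ell}$ with $\nu'<1$ and treating boxes within distance $2^{\nu'\ell}$ of $\partial B(2^\ell)$ separately via half-plane and three-quarter-plane $5$-arm events (exponents $5$ and $10/3$, both $>2$). Similarly, the open radial arm from the outermost property of $\tilde{\mathcal{C}}_k=\mathcal{C}_{j^\ast}$ terminates at $\mathcal{C}_{j^\ast+1}$, which may itself enter the picture at some intermediate scale; the paper handles this by summing over that scale and using that beyond it one has a $7$-arm event (exponent $4$). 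Without these two refinements the union bound you describe does not close, since near the inner boundary you would be summing $(r/R)^{\alpha'}$ with fewer arms over $O(R/r)$ boxes and the exponent count is delicate. These are not cosmetic details---they are exactly the places where the paper invokes the half-plane and $7$-arm exponents---so your sketch should flag them.
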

	\begin{proof}
	The bound on $\prob(G_k)$ follows from Lemma~\ref{lem:good}, \eqref{eq: Njc_prob}, \eqref{eq: N3_prob}, and the fact that for all $k\geq 1$, 
	\begin{equation}
	\label{eq: fact}
	\prob(\log(\diam(\mathcal{C}_{j+2})) < c_{7}j \text{ or } \geq C_{7}j\textnormal{ for some $j\geq k$})\leq \frac{C_{25}}{k},
	\end{equation}
	and
	\begin{equation}
	\label{eq: fact2}
	\prob(\diam(\mathcal{C}_{j+2}) > (j+1)^{C_6} \diam(\mathcal{C}_{j+1})\textnormal{ for some $j\geq k$})\leq \frac{C_{26}}{k},
	\end{equation}
	Here $C_6, C_7$ are chosen large enough and $c_{7}$ is chosen small enough. One can show both \eqref{eq: fact} and \eqref{eq: fact2} hold for all $k$ by following the proof of \cite[Lemma~4]{KSZ}. We omit the details here.
	
	We now move to the proof of the second statement, the bound on $\mathbf{P}(F_k)$. In $A(r)$ ($r\geq 0$), let $\hat{\mathcal{O}}_r$ be the innermost open circuit (if there exists one) and let $\hat{\mathcal{C}}_r$ be the first circuit in the sequence in Lemma~\ref{lem: outermost} with $\hat{\mathcal{O}}_r$ in the interior of $\hat{\mathcal{C}}_r$. Because $\mathcal{O}_k\subseteq A(m(k))$, the probability $\prob(F_k)$ equals
	\begin{align}
	&\sum_{\ell = k}^{\infty} \prob(\exists\; v\in \mathcal{O}_k \text{ with } d(v, \tilde{\mathcal{C}}_k) < 2(\diam(\tilde{\mathcal{C}}_k))^{c_1'}\mid m(k) = \ell)\prob(m(k) = \ell) \nonumber \\
	\leq~& \sum_{\ell = k}^{\infty} \prob(\exists\; v\in\hat{ \mathcal{O}}_\ell \text{ with } d(v,\hat{\mathcal{C}}_\ell) <2(\diam(\hat{\mathcal{C}}_\ell))^{c_1'} \mid \exists\text{ open circuit around $0$ in $A(\ell)$}) e^{-c_{20}(\ell-k)} \label{eq: label_it!}.
	\end{align} 
	To change the conditioning above, we used independence of the site variables in disjoint annuli, and to bound $\mathbf{P}(m(k) = \ell)$, we used the RSW theorem (see \cite[Eq.~(2.28)]{KestenZhang}).
	
	To bound \eqref{eq: label_it!}, we first show that for any choice of $\nu \in (c_1',1)$, one has $2(\diam(\hat{\mathcal{C}}_\ell))^{c_1'}< 2^{\nu\ell}$ with high probability. To see this, fix $\alpha>1$ and consider $\prob(\diam(\hat{\mathcal{C}}_\ell) \geq 2^{\alpha\ell})$. For $r\geq 0$, define $E_r$ to be the event that
	\begin{itemize}
		\item there exists a closed circuit surrounding $0$ in $A(r)$, and
		\item there exists an open circuit surrounding the above closed circuit in $A(r)$.
	\end{itemize}
	By the RSW theorem, there exists $\kappa>0$ such that $\prob(E_r) > \kappa$ for all $r\geq 0$. Because there is no closed circuit surrounding 0 strictly between $\hat{\mathcal{O}}_\ell$ and $\hat{\mathcal{C}}_\ell$, the event $\{\diam(\hat{\mathcal{C}}_\ell) \leq 2^d\}$ contains $E_d$. Therefore, by independence,
	\begin{equation}\label{eq: alpha_inequality}
	\prob(\diam(\hat{\mathcal{C}}_\ell) \geq 2^{\alpha\ell} \mid  \exists\text{ open circuit around $0$ in $A(\ell)$} ) \leq \prob\left(\bigcap_{k=1}^{p(\ell)} E_{\ell+k}^c\right) \leq e^{-\beta p(\ell)}
	\end{equation}
	for some $\beta>0$ depending only on $\alpha$, and $p(\ell)$ is the integer such that
	\[
	\diam(B(2^{\ell+p(\ell)})) \leq 2^{\alpha \ell} < \diam(B(2^{\ell+p(\ell)+1})).
	\]
	Solving for $p(\ell)$, we see that $p(\ell) > (\alpha-1)\ell - 5/2$. Thus 
	\[
	\prob(\diam(\hat{\mathcal{C}}_\ell) \geq 2^{\alpha\ell}) \leq e^{-\beta' \ell}
	\]
	for some $\beta'>0$ depending only on $\alpha$.
	
	In particular, for any $\nu\in(c_1',1)$, we can apply \eqref{eq: alpha_inequality} with $\alpha = \frac{1+\nu/c_1'}{2}$ to obtain 
	\begin{align}
	&\prob(\exists\; v\in\hat{ \mathcal{O}}_\ell \text{ such that } d(v,\hat{\mathcal{C}}_\ell) <2(\diam(\hat{\mathcal{C}}_\ell))^{c_1'} \mid \exists\text{ open circuit around $0$ in $A(\ell)$}) \nonumber\\
	\leq~& \prob(\exists\; v\in\hat{ \mathcal{O}}_\ell \text{ such that } d(v,\hat{\mathcal{C}}_\ell) <2^{\nu\ell} \mid \exists\text{ open circuit around $0$ in $A(\ell)$}) + e^{-\beta'' \ell}  \nonumber \\
	\leq~&C_{27}\prob(\exists\; v\in\hat{ \mathcal{O}}_\ell \text{ such that } d(v,\hat{\mathcal{C}}_\ell) <2^{\nu\ell}) + e^{-\beta''\ell} \label{eq: beta_double_prime}.
	\end{align}
	for some $\beta''>0$ depending on $\nu$ and $c_1'$. Here, we have used the RSW theorem to remove the conditioning.

	We now cover $A(\ell)$ by the squares $\tau(r,s)$ from Lemma~\ref{lem:squares}. These were defined as
	\[
	\tau(r,s) = \tau(r,s;\nu,\ell) = \left[ r2^{\nu(\ell+1)}, (r+3)2^{\nu(\ell+1)}\right] \times \left[ s2^{\nu(\ell+1)},(s+3)2^{\nu(\ell+1)}\right],
	\]
	where $\nu$ (as above) is any number in $(c_1',1)$. These squares are defined so that any $v \in A(\ell)$ is in the central square $\tau'(r,s)$ of sidelength $2^{\nu(\ell+1)}$ of some $\tau(r,s)$.
	
	To bound the probability in \eqref{eq: beta_double_prime}, we need the notion of arm events. For any $j\geq 1$, any sequence $\sigma = (\sigma_1,\ldots,\sigma_j)$, where $\sigma_i\in\{\text{open}, \text{closed}\}$, and any positive integers $n,N$ with $n\leq N$, we define the $j$-arm event, $A_{j, \sigma}(n, N)$, to be the event that there exist $j$ disjoint paths from $B(n)$ to $\partial B(N)$, and the $i$-th path has occupation status $\sigma_i$ (either open or closed), taken in counterclockwise order. Similarly, we define $A^{1/2}_{j, \sigma}(n, N)$ and $A^{3/4}_{j, \sigma}(n, N)$ in the same way, but further restrict the paths to lie entirely in the upper half plane and in the union of the first three quadrants respectively. It is known (see for instance \cite{Nolin} or \cite{Smirnov}) that if the $\sigma_i$'s are not all open or all closed (this is irrelevant except in the full plane),
	\begin{equation}
	\label{eq: exponents}
	\prob(A_{j, \sigma}(n, N)) = \left(\frac{N}{n}\right)^{-(j^2-1)/12 + o(1)} ,\quad \prob(A^{1/2}_{j, \sigma}(n, N)) = \left(\frac{N}{n}\right)^{-j(j+1)/6 + o(1)} \text{ as } N/n \to \infty.
	\end{equation}
	Further, by conformal invariance of limiting crossing probabilities, one can also deduce that
	\begin{equation}
	\label{eq: 3/4_exponent}
	\prob(A^{3/4}_{j, \sigma}(n, N)) = \left( \frac{N}{n}\right)^{-j(j+1)/9 + o(1)} \text{ as } N/n \to \infty.
	\end{equation}
	
	For our chosen $\nu \in (c_1',1)$, now pick $\nu' \in (\nu, 1)$, and suppose that the event 
	\[
	\{\exists\; v\in\hat{ \mathcal{O}}_\ell \text{ such that } d(v,\hat{\mathcal{C}}_\ell) <2^{\nu\ell}\}
	\] 
	occurs and select such a $v$. We will sketch the idea of how arm events help us to bound the probability of this event. Because it is a standard ``arms reckoning'' argument, we omit details and refer the reader to similar arguments in \cite[Lemma~6]{KSZ}. Choose a central square $\tau'(r,s)$ that contains $v$. If the distance between $\tau'(r,s)$ and the box $B(2^\ell)$ is at least $2^{\nu'\ell}$, then there is a 6-arm event in $B(v, 2^{\nu'\ell}) \setminus \tau'(r,s)$ (here $B(v,q) = v+B(q)$). Otherwise, it will yield a 5-arm event on a half plane or on a $3/4$-plane, depending on the position of $\tau'(r,s)$.
	
	\begin{figure}[h]
		\centering
		\includegraphics[width=.6\textwidth]{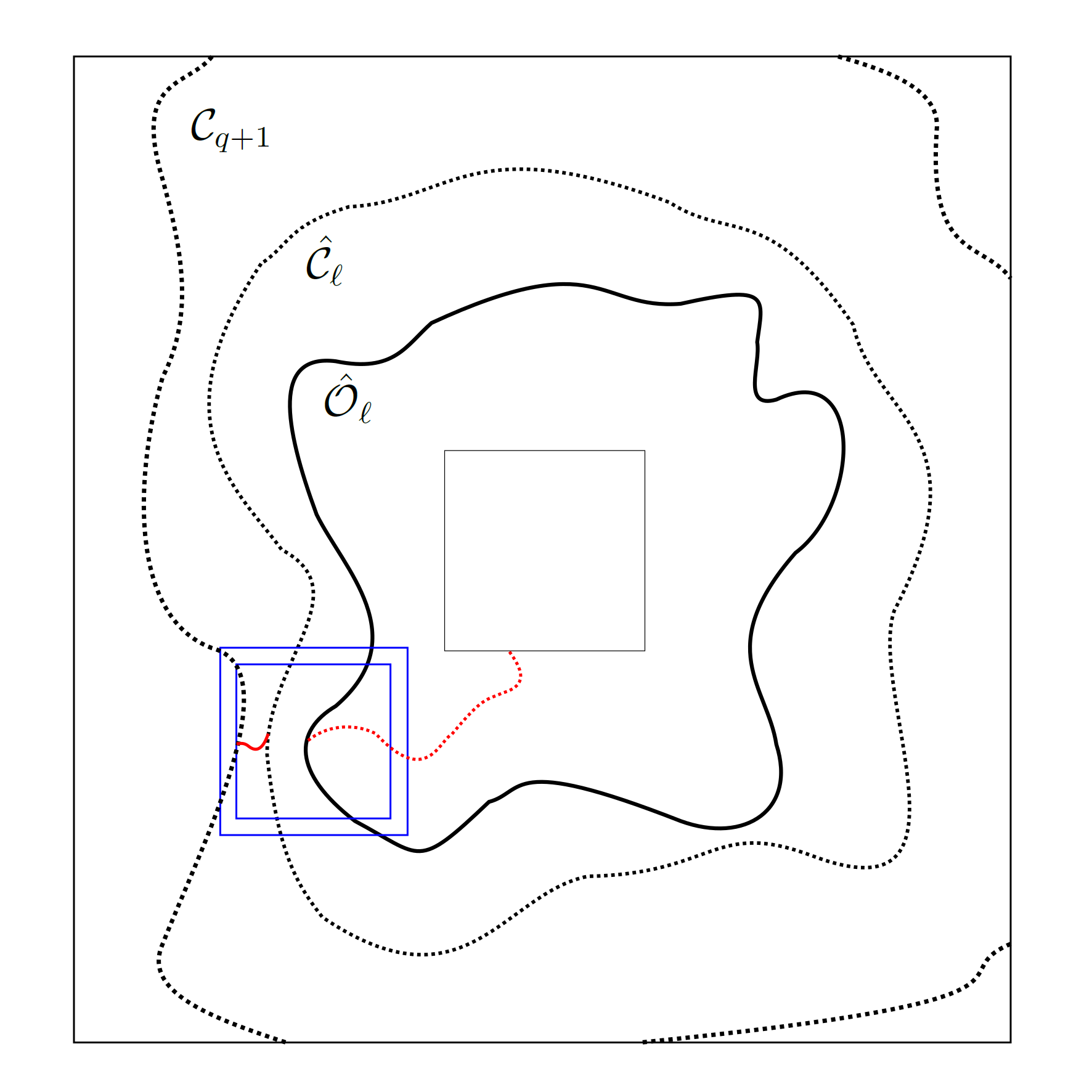}
		\caption{If $\hat{\mathcal{O}}_\ell$ is too close to $\hat{\mathcal{C}}_\ell = \mathcal{C}_q$, then locally there is a $6$-arm event (see the blue annulus). Four of the six arms are furnished by the circuits $\hat{\mathcal{O}}_\ell$ and $\hat{\mathcal{C}}_\ell$ themselves, and the other two, drawn in red, exist because $\hat{\mathcal{O}}_\ell$ is innermost and $\hat{\mathcal{C}}_\ell$ is outermost in the interior of $\mathcal{C}_{q+1}$. If $\mathcal{C}_{q+1}$ is also close to $\hat{\mathcal{C}}_\ell$, there is even a $7$-arm event.}
		\label{fig: armevents}
	\end{figure}
	
	The reason that a 6-arm event will occur is as follows. First, since $\hat{ \mathcal{O}}_\ell$ is the innermost open circuit in $A(\ell)$, for any vertex in $\hat{ \mathcal{O}}_\ell$, in particular $v$, there exists a closed path from that vertex to the inner boundary of $A(\ell)$. Since $B(v, 2^{\nu'\ell}) \subseteq A(\ell)$, there is a closed path from $\tau'(r,s)$ to $\partial B(v,2^{\nu'\ell})$. The circuit $\hat{\mathcal{O}}_\ell$ furnishes two more disjoint open paths. Similarly, because $\hat{\mathcal{C}}_\ell = \mathcal{C}_q$ is a circuit in the sequence of Lemma~\ref{lem: outermost}, for any vertex on $\hat{\mathcal{C}}_\ell$, there exists an open path from that vertex to the next circuit in the sequence, $\mathcal{C}_{q+1}$. If $\mathcal{C}_{q+1}$ does not intersect $B(v, 2^{\nu'\ell}) \setminus \tau'(r,s)$, then we choose for our other three paths the open path mentioned in the previous sentence, and two disjoint closed paths furnished by the circuit $\hat{\mathcal{C}}_\ell$. These would give the final three of the required six arms.
	
	If, on the other hand, $\mathcal{C}_{q+1}$ does intersect $B(v,2^{\nu'\ell}) \setminus \tau'(r,s)$, then we have six arms from $\tau'(r,s)$ to the boundary of some intermediate annulus (between $\tau'(r,s)$ and $B(v,2^{\nu'\ell})$), and then 7 arms from the boundary of this annulus to $\partial B(v,2^{\nu'\ell})$ (see Figure~\ref{fig: armevents}). (The circuits $\hat{\mathcal{C}}_\ell$ and $\mathcal{C}_{q+1}$ furnish four more arms.) Writing $\pi_j(n_1, n_2)$ for the probability of the $j$-arm event in $B(2^{\nu\ell+(\nu'-\nu)n_2}) \setminus B(2^{\nu\ell+(\nu'-\nu)n_1})$, where $n_2>n_1$, and summing over possible positions of the intermediate annulus, we have the following probability bound corresponding to cases in which $\tau'(r,s)$ has distance at least $2^{\nu'\ell}$ from $B(2^\ell)$:
	\[
	C_{28}\#\{\tau'(r,s) \text{ intersecting } B(2^{\ell+1})\} \sum_{r=1}^{\ell-1} \pi_6(0, r)\pi_7(r+1,\ell).
	\]
	The sum is further bounded above by
	\[
	C_{29} 2^{(2-2\nu)\ell} \sum_{r=1}^{\ell - 1} 2^{-(35/12+o(1))(\nu'-\nu)r}2^{-(4+o(1))(\nu'-\nu)(\ell-r)} \leq C_{30}2^{(-(35/12+o(1))(\nu'-\nu) + (2-2\nu))\ell},
	\]
	where the exponents $-35/12$ and $-4$ come from \eqref{eq: exponents} (putting $j=6$ and $7$ respectively). 
	The right side goes to $0$ if $\nu' \in \left(\frac{24}{35}+\frac{11}{35}\nu, 1\right)$. 
%
	
	To deal with the cases where $\tau'(r,s)$ is close to either a corner or a side of $B(2^{\ell})$, we use a similar argument, but further decomposing the arm events according to their distance to $B(2^\ell)$. Because the exponents for the 5-arm event in the half plane and in the $3/4$-plane are $5$ and $10/3$ respectively (putting $j=5$ in \eqref{eq: exponents} and \eqref{eq: 3/4_exponent}), and both are greater than $2$, we are able to choose $\nu'$ close enough to 1 so that the probability corresponding to such $\tau'(r,s)$ is also small. Putting together all the cases, we have
	\[
	\prob(\exists\; v\in\hat{ \mathcal{O}}_\ell \text{ such that } d(v,\hat{\mathcal{C}}_\ell) <2^{\nu\ell}) \leq C_{31} e^{-c_{21}\ell}.
	\]
	Plugging this back into \eqref{eq: beta_double_prime}, and then back into \eqref{eq: label_it!} completes the proof.
	\end{proof}

	We return to \eqref{eq: y_tilde_breakdown}. First, the inequality
	\begin{equation}\label{eq: first_term}
	\E\tilde{Y}^2 \mathbf{1}_{F_k\cup G_k} \leq C_{32}/\sqrt{k}.
	\end{equation}
	follows directly from Lemmas~\ref{lem: preliminary_bounds} and \ref{lem: G_k_lemma}. (For small $k$, we remove the indicator and use Lemma~\ref{lem: preliminary_bounds} only, and for larger $k$, we bound by $\sqrt{\E \tilde Y^4 \prob(F_k \cup G_k)}$ and apply both lemmas.)
%

	Next, we bound the second term of \eqref{eq: y_tilde_breakdown} by showing that for large $k$,
	\begin{equation}\label{eq: second_term}
	\E \tilde{Y}^2\mathbf{1}_{F_k^c \cap G_k^c} \leq C_{33}a_k^2 + C_{34}e^{-c_{8}k/2}.
	\end{equation}
	To do this, we apply Remark~\ref{rem: construction}. Since $\tilde{\mathcal{C}_k}$ is the first circuit in the sequence from Lemma~\ref{lem: outermost} outside of $\mathcal{O}_k$, there are no closed circuits in the region strictly between them. By planar duality, there must be an open path $\mathcal{D}$ connecting a neighbor of $\mathcal{O}_k$ to a neighbor of $\tilde{\mathcal{C}_k}$. Choose $v^{(k)}$ to be any vertex of $\mathcal{O}_k$ adjacent to this open path. Because the event $F_k^c$ occurs, $v^{(k)}$ must have distance at least $2(\diam(\tilde{\mathcal{C}_k}))^{c_1'}$ from $\tilde{\mathcal{C}_k}$, and so the diameter of the open path $\mathcal{D}$ is at least $(\diam(\tilde{\mathcal{C}_k}))^{c_1'}$. Together with the conditions comprising the event $G_k^c$, this is sufficient to invoke the remark, and to deduce that, with conditional probability $\geq 1-e^{-c_{8}k}$ (conditioned on $\eta$), there are sequences $(v^{(i)})_{i \geq k+1}$ and $(\mathcal{D}^{(i)})_{i \geq k}$ as in the conclusion of Lemma~\ref{lem: induction}. In particular, we may find an infinite path $\gamma$ starting from a neighbor of $v^{(k)}$ which consists of only zero-weight vertices or low-weight vertices which are on the circuits $\mathcal{C}_j$ (at most one from each $\mathcal{C}_j$) from Lemma~\ref{lem: outermost}.	Letting $\Upsilon$ be the event that this $\gamma$ exists, we obtain by the Cauchy-Schwarz inequality that for large $k$,
	\begin{align}
	\E \tilde{Y}^2 \mathbf{1}_{F_k^c \cap G_k^c} &\leq \E \tilde{Y}^2 \mathbf{1}_{\Upsilon} + \E\left[ \E[ \tilde{Y}^2 \mathbf{1}_{\Upsilon^c} \mid \eta] \mathbf{1}_{F_k^c \cap G_k^c}\right] \nonumber \\
	&\leq \E \tilde{Y}^2 \mathbf{1}_\Upsilon + e^{-c_{8}k/2} \E\left[ \sqrt{ \E[ \tilde{Y}^4 \mid \eta]} \mathbf{1}_{F_k^c \cap G_k^c}\right] \nonumber \\
	&\leq \E \tilde{Y}^2 \mathbf{1}_\Upsilon + C_{34}e^{-c_{8}k/2}. \label{eq: priliminri_buundz}
	\end{align}
	where we used Lemma~\ref{lem: preliminary_bounds} in the last line.
	
	On the event $\Upsilon$, write $\gamma_k$ for the segment of $\gamma$ beginning at $v^{(k)}$ and ending at the first intersection of $\gamma$ with $\mathcal{O}_{\ell(k,\omega,\omega)}$. Then
	\[
	T^\textnormal{B}(\mathcal{O}_k,\mathcal{O}_{\ell(k,\omega,\omega)}) \leq T(\mathcal{O}_k,\mathcal{O}_{\ell(k,\omega,\omega)}) \leq T(\gamma_k).
	\]
	All vertices $w$ on $\gamma_k$ which have nonzero weight are of low weight, and so such $w$ satisfy $t_w \leq I+a_k$. (Here we use that the sequence $(a_j)$ is nonincreasing.) Distinct $w$'s on $\gamma_k$ correspond to distinct circuits $\mathcal{C}_j$. Therefore if we define
	\[
	N_L = \text{ maximal number of disjoint closed circuits around } 0 \text{ intersecting $B(2^L) \setminus B(2^k)$},
	\]
	then we have
	\begin{equation}\label{eq: come_back_here!}
	\E \tilde{Y}^2\mathbf{1}_{\Upsilon} \leq a_k^2 \E N_{\ell(k,\omega,\omega)}^2.
	\end{equation}
	Next, we use \cite[Lemma~2]{YaoAsymptotics}, which, in our context, states that $\E N_L^4 \leq C_{35}\log^4(2^L/2^k)$, and this is bounded by $C_{36}(L-k)^4$. Therefore, the expectation in \eqref{eq: come_back_here!} equals
	\begin{align*}
	\E N_{\ell(k,\omega,\omega)}^2 &= \sum_{m=k}^\infty \sum_{t = 0}^{\infty} \E N_{\ell(k,\omega,\omega)}^2 \mathbf{1}_{\{m(k,\omega) = m\}} \mathbf{1}_{\{\ell(k,\omega,\omega) = m+t+1\}}\\
	&\leq \sum_{m=k}^\infty \sum_{t = 0}^{\infty} (\E N_{m+t+1}^4)^{1/2} \prob(m(k,\omega) = m, \ell(k,\omega,\omega) = m+t+1)^{1/2}\\
	&\leq \sum_{m=k}^\infty \sum_{t = 0}^{\infty} C_{36}^{1/2} (1+t+m-k)^2 \prob(m(k,\omega) = m, \ell(k,\omega,\omega) = m+t+1)^{1/2}.
	\end{align*}
	Since $\prob(m(k,\omega) = m, \ell(k,\omega,\omega) = m+t+1) \leq C_{37}e^{-c_{22}(t+m-k)}$ by the RSW theorem, the above expression is summable and independent of $k$. Returning to \eqref{eq: come_back_here!}, and placing this in \eqref{eq: priliminri_buundz}, we obtain $\E \tilde{Y}^2\mathbf{1}_{F_k^c\cap G_k^c} \leq C_{33}a_k^2 + C_{34}e^{-c_{8}k/2}$. This shows \eqref{eq: second_term}.
	
	Together, \eqref{eq: first_term} and \eqref{eq: second_term} show that $\E \tilde{Y}^2 \leq C_{38}\max\{a_k^2, k^{-1/2}\}$. $X$ and $Z$ from \eqref{eq: x_y_z} can be bounded similarly, so returning to \eqref{eq: variance_difference} yields
	\begin{equation}\label{eq: smores_quest_bar}
	\left| \Var(T(0, \mathcal{O}_n)) - \Var(T^\textnormal{B}(0, \mathcal{O}_n))\right| \leq C_{39} \sum_{k=1}^n \max\{a_k^2+a_k, k^{-1/2}+k^{-1/4}\} = o(n).
	\end{equation}
	
	Finally, we argue that the above inequality implies
	\begin{equation}
	\label{eq: var_diff}
	\left| \Var(T(0, \partial B(n))) - \Var(T^\textnormal{B}(0, \partial B(n)))\right| = o(\log{n}),
	\end{equation}
	which, along with Yao's results quoted above Theorem~\ref{thm:lln}, gives \eqref{eq: var}. The main ingredient in the proof of \eqref{eq: var_diff} is the following moment bound. (Our argument is modified from \cite[Lemma~5.7]{critical}). There exists $C_{38}$ such that for all sufficiently large $n,q \geq 1$ such that $2^{q-1} \leq n < 2^q$,
	\begin{equation}\label{eq: strawberry_cheesecake_quest_bar}
	\E(T(0,\partial B(n)) - T(0,\mathcal{O}_q))^2 \leq C_{40}.
	\end{equation}
	The same method can be used (or \cite[Lemma~5.7]{critical} can be used directly) to show the corresponding statement for $T^\textnormal{B}$ in place of $T$. Observe that for $1 \leq \ell \leq q$, on the event $\{m(q-\ell) \geq q-1 > m(q-\ell-1)\} \cap \{m(q)=q+t\}$, we have
	\[
	|T(0,\partial B(n)) - T(0,\mathcal{O}_q)| \leq T(\partial B(2^{q-\ell-1}), \partial B(2^{q+t+1}))
	\]
	and on the event $\{m(q-\ell)\geq q-1 > m(q-\ell-1)\}$, we have
	\[
	|T(0,\partial B(n)) - T(0,\mathcal{O}_q)| \leq T(\partial B(2^{q-\ell-1}), \mathcal{O}_q).
	\]

	Then define the events $A_\ell := \{m(q-\ell)\geq q-1 > m(q-\ell-1)\}$, for $1 \leq \ell \leq q$, and $B_t:= \{m(q)=q+t\}$, for $t \geq 0$. Using the above inequalities and the fact that the events $A_\ell \cap B_t$ (over all $t,\ell$) cover the whole probability space, we have
	\begin{align}
	\E(T(0,\partial B(n))- T(0,\mathcal{O}_q))^2 &\leq \sum_{\ell=1}^{q-q_0} \sum_{t=0}^\infty \E T^2(\partial B(2^{q-\ell-1}), \partial B(2^{q+t+1})) \mathbf{1}_{A_\ell \cap B_t} \nonumber\\
	&+\sum_{\ell=q-q_0+1}^q \E T^2(\partial B(2^{q-\ell-1}), \mathcal{O}_q) \mathbf{1}_{A_\ell}\label{eq: quest_3} .
	\end{align}
	Here, $q_0$ is equal to $\lceil \log_2 m_0\rceil +1$, where $m_0$ is from \eqref{eq: pasta_supremo}. For summands in the first line, we use the Cauchy-Schwarz inequality to bound them by
	\[
	\sqrt{\E T^4(\partial B(2^{q-\ell-1}), \partial B(2^{q+t+1}))} \left(\prob(A_\ell)\prob(  B_t)\right)^{1/4} \leq C_{41} (t+\ell+2)^{C_{42}} \left( \prob(A_\ell) \prob( B_t)\right)^{1/4}.
	\]
	For summands in the second line, assuming that $2^q \geq m_0+100$, we replicate the argument leading to \eqref{eq: quest_2}. Specifically, letting $\pi$ be a geodesic between $B(m_0)$ and $\mathcal{O}_q$, and $\tilde \pi$ be the portion of $\pi$ from its initial point to its first intersection with $\partial B(m_0+100)$, then the summand of \eqref{eq: quest_3} is at most
	\begin{equation}\label{eq: quest_4}
	2\E T^2(0,\tilde \pi) + 2\sum_{t=0}^\infty \E T^2(B(m_0), \partial B(2^{q+t+1}))\mathbf{1}_{A_\ell \cap B_t}.
	\end{equation}
	We can then, as before, sum over all possible values of $\tilde \pi = P$, and bound the passage time from 0 to $P$ using six disjoint (except for their initial points) deterministic paths. This leads to the bound $\E T^2(0,\tilde \pi) \leq C_{41}$. Applying the Cauchy-Schwarz inequality to the other term gives the following bound for \eqref{eq: quest_4}:
	\[
	C_{43} + C_{41} \sum_{t=0}^\infty (q+t+1-\log_2 m_0)^{C_{42}} \left( \prob(A_\ell) \prob( B_t)\right)^{1/4}.
	\]
	Using these inequalities in \eqref{eq: quest_3} gives
	\begin{align*}
	\E (T(0,\partial B(n)) - T(0, \mathcal{O}_q))^2 &\leq \sum_{\ell=1}^{q-q_0} \sum_{t=0}^\infty C_{39} (t+\ell+2)^{C_{42}} \left( \prob(A_\ell) \prob( B_t)\right)^{1/4} \\
	&+q_0 C_{43} \\
	&+ C_{41} \sum_{\ell=q-q_0+1}^q \sum_{t=0}^\infty  (q+t+1-\log m_0)^{C_{42}} \left( \prob(A_\ell) \prob( B_t)\right)^{1/4}.
	\end{align*}
	By the RSW theorem, there is $c_{23}$ such that $\prob(A_\ell) \leq e^{-c_{23}\ell}$ and $\prob(B_t) \leq e^{-c_{23}t}$, so this leads to \eqref{eq: strawberry_cheesecake_quest_bar}.
	
	Now that we have proved \eqref{eq: strawberry_cheesecake_quest_bar}, we can quickly derive \eqref{eq: var_diff}. Indeed, we estimate as follows, with $2^{q-1} \leq n < 2^q$:
	\begin{align}
	\left| \Var(T(0, \partial B(n))) - \Var(T^\textnormal{B}(0, \partial B(n)))\right| &\leq \left| \Var(T(0, \partial B(n))) - \Var(T(0,\mathcal{O}_q))\right| \label{eq: end_1}\\
	&+ \left| \Var(T^\textnormal{B}(0,\partial B(n))) - \Var(T^\textnormal{B}(0,\mathcal{O}_q))\right| \label{eq: end_2}\\
	&+ \left| \Var(T(0,\mathcal{O}_q)) - \Var(T^\textnormal{B}(0,\mathcal{O}_q))\right|. \label{eq: end_3}
	\end{align}
	By \eqref{eq: smores_quest_bar}, \eqref{eq: end_3} is $o(q) = o(\log n)$. The term \eqref{eq: end_2} is a special case of \eqref{eq: end_1} (with Bernoulli weights). To bound \eqref{eq: end_1}, we use the inequality
	\[
	|\Var(X_1) - \Var(X_2)| \leq \Var(X_1-X_2) + 2\sqrt{\Var (X_1-X_2) \Var(X_2)}.
	\]
	(To derive this, one sets $X' = X - \E X$ for a random variable $X$ and writes the left side as 
	\[
	| \|X_1'\|_2^2 - \|X_2'\|_2^2| \leq \|X_1'-X_2'\|_2 (\|X_1'-X_2'\|_2 + 2\|X_2'\|_2.)
	\]
	We put $X_1 = T(0,\partial B(n))$ and $X_2 = T(0,\mathcal{O}_q)$ to bound \eqref{eq: end_1}, noting that \eqref{eq: strawberry_cheesecake_quest_bar} implies that $\Var(X_1-X_2) \leq C_{40}$. We obtain
	\[
	C_{40} + 2\sqrt{C_{40} \Var T(0,\mathcal{O}_q)}.
	\]
	Finally, $\Var T(0,\mathcal{O}_q) = \sum_{k=0}^q \E \Delta_k^2$, which, by Lemma~\ref{lem: preliminary_bounds}, is bounded by $C_{44}q$. Therefore the sum of \eqref{eq: end_1}, \eqref{eq: end_2}, and \eqref{eq: end_3} is bounded above by
	\[
	2C_{40} + 4\sqrt{C_{40} C_{44}q} + o(\log n) = o(\log n).
	\]
	This completes the proof of \eqref{eq: var}.


\end{document}